\numberwithin{equation}{section}
\newtheorem{thm}{Theorem}[section]
\newtheorem{lemma}[thm]{Lemma}
\newtheorem{conj}[thm]{Conjecture}
\newtheorem{theorem}[thm]{Theorem}
\newtheorem{proposition}[thm]{Proposition}
\newtheorem{corollary}[thm]{Corollary}
\newcommand{\eqnum}{\leavevmode\hfill\refstepcounter{equation}\textup{\tagform@{\theequation}}}\makeatother
\DeclareMathOperator{\sgn}{sgn}
\title[Complexity of sign imbalance, parity of linear extensions, and height 2 posets]{Complexity of sign imbalance, parity of linear extensions, and height 2 posets}
\author{David Soukup}
\thanks{\thinspace ${\hspace{-.45ex}}^\star$Department of Mathematics,
UCLA, Los Angeles, CA~90095.
\hskip.06cm
Email:
\hskip.06cm
\texttt{soukup@math.ucla.edu}}
\thanks{\today}
\def\<{\langle}
\def\>{\rangle}
\def\0{{\mathbf 0}}
\def\SP{{\textsc{\#P}}}
\def\implies{\Rightarrow}
\def\.{\hskip.06cm}
\definecolor{darkblue}{rgb}{0.0,0,0.7}
\newcommand{\darkblue}{\color{darkblue}}
\definecolor{darkred}{rgb}{0.68,0,0}
\definecolor{darkgreen}{rgb}{0,.38,0}
\newcommand{\defn}[1]{\emph{\darkblue #1}}
\begin{document}

\begin{abstract}
Sign imbalance is a statistic on posets which counts the difference between the number of even and odd linear extensions. We prove complexity results about the sign imbalance and parity of linear extensions, focusing on the representative case of height 2 posets. We then consider a recent conjecture of Chan and Pak \cite{CP23+}.
\end{abstract}

\maketitle

\section{Introduction}

Let $P$ be a poset on $n$ elements, and fix some labeling of $P$ with labels $\{1, \dots, n\}$. Then the sign imbalance (defined in section \ref{basics}) is a natural statistic counting the difference between the number of odd and even linear extensions of $P$.

Sign imbalance was introduced by Ruskey in \cite{Rus88} in the context of Gray codes. Define a graph $G(P)$ with vertices corresponding to linear extensions of $P$ and connect pairs of vertices which differ by a transposition. Then it is an easy observation that if $G(P)$ has a Hamiltonian path, then the sign imbalance of $P$ must be at most 1. Furthermore, $G(P)$ is always connected (see \S\ref{geometric}).  The converse was conjectured by Ruskey in \cite{Rus88}. It remains open. Only a small class of special cases have been shown; see \cite[\S5]{Rus03} for a reference or \cite[\S5.5]{Mut23} for a more recent overview. Further information can be found in \cite{Sta05} or \cite{Knu11}. Sign imbalance has also been applied to real algebraic geometry \cite{SS06}  (see \S\ref{geometric}).

Few general results exist for computing the sign imbalance of arbitrary posets. If $P$ is a poset where every nonminimal element is greater than at least two other elements, then $P$ is sign-balanced; switching the labels 1 and 2 provides a bijection between odd and even permutations \cite{Rus88}. Suppose that $P$ is a poset on $n$ elements and that for every maximal chain $C$, the length of $C$ is congruent to $n$ modulo 2. Stanley observed in \cite{Sta05} that the promotion operator provides a sign-reversing involution and so $P$ must be sign-balanced.

Ruskey conjectured that a product of chain posets $C_m \times C_n$ with $m, n > 1$ is sign-balanced if and only if $m \equiv n$ modulo 2 and showed the case where $m, n$ are both even \cite{Rus92}. This conjecture was proven by White \cite{Whi01}, who gave a formula for the case $m \not\equiv n$.  Some other results for specific posets exist (e.g. \cite{Ber18}).

We note that sign imbalance naturally correspond to counting  domino tableaux (see Lemma~\ref{quotients}). For posets arising from Young diagrams, these are the special case of rim hook tableaux where all rim hooks have size 2 with labels that must be increasing along rows and columns.

One problem is to compute the sign imbalance of a poset:

\vspace{2mm}
\begin{tabular}{l l}
\multicolumn{2}{l}{\textsc{\large{Sign Imbalance} } } \\
\textbf{Input:}  & A poset $P$.							\\ 
\textbf{Output:} \quad & the sign imbalance $si(P)$.

\end{tabular}
\vspace{2mm}

Stachowiak gives a complexity result (cf. \S\ref{staiswrong}):

\begin{theorem}[Theorem 1 of \cite{Sta97a}] \textsc{Sign Imbalance} is \textup{ \#\textsf{P}-hard}. This holds even if we consider only posets with height 2.
\end{theorem}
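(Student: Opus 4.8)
The plan is to reduce the permanent of a $0$--$1$ matrix, equivalently the number of perfect matchings of a bipartite graph, to computing $si(P)$ for height $2$ posets; since the former is \#\textsf{P}-complete by Valiant's theorem, this yields the claim. Given an $n\times n$ matrix $A=(a_{ij})$ with entries in $\{0,1\}$, I would encode its bipartite graph as the height $2$ poset $P_A$ whose minimal elements are $x_1,\dots,x_n$, whose maximal elements are $y_1,\dots,y_n$, and in which $x_i<y_j$ exactly when $a_{ij}=1$. Each linear extension of $P_A$ contributes $\pm 1$ to $si(P_A)$, but the naive hope that these contributions simply enumerate matchings fails: for the complete bipartite poset the contributions cancel to $0$, and for a disjoint union of matched pairs they sum to a Gaussian binomial evaluated at $-1$. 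In particular the induced signs do not factor through any determinant, which is exactly why the signed count can be hard rather than polynomial-time.

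Rather than read off $\operatorname{per}(A)$ from a single poset, I would build a one-parameter family $P_A^{(t)}$ by adjoining $t$ auxiliary elements in a controlled way (for instance $t$ isolated elements, or $t$ copies of a fixed gadget placed incomparably to $P_A$), chosen so that $si\bigl(P_A^{(t)}\bigr)=f_A(t)$ is a polynomial in $t$ whose degree is bounded by a polynomial in $n$. An oracle for \textsc{Sign Imbalance}, evaluated at polynomially many integers $t$, then determines $f_A$ completely by Lagrange interpolation, and the target quantity $\operatorname{per}(A)$ will appear as one of its coefficients.

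The combinatorial core, and the step I expect to be the main obstacle, is to pin down the gadget together with a sign-reversing involution on the linear extensions of $P_A^{(t)}$ under which every extension not arising from a perfect matching of $A$ cancels in pairs, while the surviving extensions are counted with a uniform sign and a known multiplicity. Concretely, one wants a fixed-point-free, sign-reversing involution on the non-matching extensions—swapping the relative order of a suitable pair of elements whose transposition flips the parity of the extension but preserves all order constraints—so that the relevant coefficient of $f_A$ equals $\pm c\cdot\operatorname{per}(A)$ for an explicit nonzero constant $c$. Verifying that such an involution exists, that it interacts correctly with the auxiliary structure of $P_A^{(t)}$, and that it isolates exactly the matching terms is the delicate part; the shuffle computations above indicate that the residual signs are governed by Gaussian binomials at $-1$, and these factors must be tracked carefully and absorbed into $c$.

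Finally I would check that the whole construction stays within the height $2$ class and has size polynomial in $n$ and $t$. The poset $P_A$ is height $2$ by construction, and adjoining incomparable auxiliary elements preserves both the height and the polynomial size bound, so the reduction is legitimate and establishes that \textsc{Sign Imbalance} is \#\textsf{P}-hard already for posets of height $2$.
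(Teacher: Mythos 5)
There is a genuine gap here: the entire combinatorial core of your reduction --- the gadget, the sign-reversing involution that kills all non-matching linear extensions, and the verification that the surviving terms contribute $\pm c\cdot\operatorname{per}(A)$ to an interpolable coefficient --- is announced but never constructed. You yourself flag it as ``the main obstacle'' and ``the delicate part.'' As written, the argument is a research plan, not a proof. The difficulties you correctly identify (cancellation in the complete bipartite poset, Gaussian binomials at $-1$ governing shuffle signs) are real and are precisely why the naive encoding of a bipartite graph as a two-level poset does not work; but nothing in the proposal overcomes them. In particular, it is not established that $si\bigl(P_A^{(t)}\bigr)$ is a polynomial in $t$ (the absolute value in the definition of $si$ already threatens this), nor that any coefficient of such a polynomial would equal the permanent up to a computable nonzero factor.

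The route the paper takes (following Stachowiak) is both simpler and avoids the permanent entirely: it reduces from counting linear extensions of an \emph{arbitrary} poset $P$, which is \#\textsf{P}-hard by Brightwell--Winkler. Given $P=(X,\prec)$, replace each $x\in X$ by a covering pair $(x,0)\prec(x,1)$ and add $(x,0)\prec(y,1)$ for each relation $x\preccurlyeq y$ in a good set $R$; the resulting poset $A(P,R)$ has height $2$, its Hasse diagram has a unique perfect matching $M=\{((x,0),(x,1))\}$, hence a unique domino tableau, and $A(P,R)/M\cong P$. The domino-tableau identity $si(Q)=\bigl|\sum_{M}\sgn(M)\,e(Q/M)\bigr|$ then collapses to $si(A(P,R))=e(P)$, a parsimonious reduction with no interpolation, no gadget, and no cancellation analysis needed. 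If you want to salvage your approach, the honest fix is to replace the permanent-plus-interpolation scheme with this doubling construction; otherwise you must actually exhibit the involution and prove the polynomiality and coefficient-extraction claims, which is substantially harder than the problem warrants.
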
 

The proof gives a parsimonious reduction from the sign imbalance of height 2 posets to counting linear extnsions. This corresponds to one direction of Lemma \ref{mainlemma}. Since counting linear extensions was shown to be \#\textsf{P}-hard by Brightwell and Winkler \cite{BW91}, this shows that \textsc{Sign Imbalance} is \#\textsf{P}-hard.

By a theorem of Dittmer and Pak \cite{DP20}, counting linear extensions is still \#\textsf{P}-hard even in the restricted case of  height 2 posets. We prove a complementary result: determining whether a height 2 poset has at least a given sign imbalance is decidable in polynomial time.

\vspace{2mm}

\begin{tabular}{l l}
\multicolumn{2}{l}{\textsc{\large{H2SB} } } \\
\textbf{Input:}  & A poset  $P$ of height $2$ and an integer $k$.							\\ 
\textbf{Decide:} \quad & Is the sign imbalance of $P$ at least $k$?

\end{tabular}

\begin{theorem} \textsc{H2SB} is in \textsf{P}. In the specific case $k = 1$ we obtain that determining whether a height 2 poset is sign balanced is in \textsf{P}.
\end{theorem}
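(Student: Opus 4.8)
The plan is to reduce \textsc{H2SB} to the \emph{exact} computation of $si(P)$ for height $2$ posets, and then to exhibit a polynomial-time algorithm for that computation. Once $si(P)$ is known as an integer, deciding whether $si(P)\ge k$ is immediate, and the case $k=1$ reads off whether $P$ is sign balanced (using that $|si(P)|$ is labeling-independent, so the overall sign contributed by the given labeling relative to a fixed reference labeling is a single permutation sign, computable in polynomial time). I emphasize that this is exactly the point of \S\ref{staiswrong}: producing such an algorithm shows that the height $2$ restriction in Stachowiak's hardness claim cannot hold, and the entire theorem rests on beating the apparent \#\textsf{P}-hardness of the signed linear-extension count by exploiting cancellation.

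To compute $si(P)=\sum_{w\in\mathcal{L}(P)}\sgn(w)$, I would fix the natural labeling in which the $p$ minimal elements (``bottoms'') receive labels $1,\dots,p$ and the $q$ maximal elements (``tops'') receive $p{+}1,\dots,p{+}q$, and decompose each linear extension according to its \emph{shuffle} $S$, the set of positions occupied by tops. Because every top-label exceeds every bottom-label, the cross-inversions depend only on $S$, giving a global factor $(-1)^{c(S)}$ with $c(S)=\#\{(s,r): s\in S,\ r\notin S,\ s<r\}$, and the remaining sign splits as $(-1)^{\operatorname{inv}_B}(-1)^{\operatorname{inv}_T}$ over the bottom- and top-arrangements. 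For a fixed $S$ and a fixed bottom-arrangement $\pi\in S_p$, validity of a top-arrangement is the condition that each top $t_j$ sits at a position whose prefix already contains all bottoms below it; this restricts $t_j$ to an up-set $\{m:m\ge\theta_j(S,\pi)\}$ of top-slots. The signed sum over valid top-arrangements is then a determinant of the $0/1$ staircase matrix $A=A(S,\pi)$ with $A_{j,m}=[\,m\ge\theta_j(S,\pi)\,]$, by the Leibniz expansion. Hence
\[
si(P)=\sum_{S}(-1)^{c(S)}\sum_{\pi\in S_p}(-1)^{\operatorname{inv}\pi}\,\det A(S,\pi),
\]
an exact but still exponentially long alternating sum. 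The goal is to collapse it, via Cauchy--Binet or a sign-reversing involution, into a single Pfaffian or determinant $si(P)=\pm\operatorname{Pf}(M)$ of a polynomial-size matrix $M$ read off from the bipartite comparability data; conceptually this is the expected outcome, since by Lemma~\ref{quotients} sign imbalance is a signed count of domino fillings and signed tilings are governed by a Kasteleyn-type Pfaffian. I would calibrate $M$ against the small cases (a $2$-chain gives $1$; two disjoint $2$-chains give $\pm2=\binom{4}{2}_{-1}$; a ``$\vee$'' gives $0$) and treat isolated elements, which contribute $q=-1$ Gaussian-binomial shuffle factors, as a separate but routine bookkeeping step.

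The main obstacle is precisely this collapse: proving that the alternating sums over the exponentially many shuffles $S$ and bottom-arrangements $\pi$ cancel down to a single polynomially evaluable determinant or Pfaffian, i.e.\ constructing $M$ and verifying $si(P)=\pm\operatorname{Pf}(M)$ by exhibiting a sign-reversing involution that annihilates every non-permutation (equivalently, non-matching) term. This is exactly the cancellation that is invisible to the permanent/linear-extension viewpoint underlying the reduction of Lemma~\ref{mainlemma}, and it is where the purported height $2$ hardness breaks down. Secondary checks include confirming the sign bookkeeping in the shuffle decomposition (that $\sgn(w)$ factors as $(-1)^{c(S)}\sgn(\pi)\sgn(\rho)$) and verifying that the resulting matrix has polynomially bounded, polynomially computable integer entries so that its Pfaffian, and hence the decision $si(P)\ge k$, is obtained in polynomial time.
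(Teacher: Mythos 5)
Your proposal does not close the argument, and, more importantly, the step you defer --- collapsing the alternating sum into a single polynomial-size determinant or Pfaffian so that $si(P)$ can be \emph{computed exactly} in polynomial time --- cannot be carried out unless \textsf{FP} $=$ \textsf{\#P}. By Lemma~\ref{mainlemma}, for every poset $P$ and every good $R$ the height~2 poset $A(P,R)$ satisfies $si(A(P,R))=e(P)$; since counting linear extensions of arbitrary posets is \textsf{\#P}-hard, exact evaluation of the sign imbalance of height~2 posets is \textsf{\#P}-hard as well. That is precisely Stachowiak's theorem as quoted in the introduction, which the paper endorses rather than refutes, so your reading that ``the height 2 restriction in Stachowiak's hardness claim cannot hold'' inverts the actual situation. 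The theorem you are asked to prove concerns only the \emph{decision} problem $si(P)\ge k$, and the remark immediately following it (``the polynomial bound above implicitly depends on $k$'') signals that $k$ is to be treated as fixed. Any strategy that routes through exact evaluation of $si$ is therefore doomed; the ``main obstacle'' you flag is not a technical gap to be filled but an impossibility (modulo standard complexity assumptions).

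The intended route uses the structure theory already developed. Given a height~2 poset $Q$, decide in polynomial time whether its Hasse diagram has zero, exactly one, or at least two perfect matchings (near-perfect, with the unmatched vertex an isolated maximal element, when $|Q|$ is odd). By the uniqueness argument inside the proof of Lemma~\ref{mainlemma}, a domino tableau, if one exists, must be the unique such matching; hence in the cases ``zero'' and ``at least two'' there is no domino tableau and Corollary~\ref{onedt} gives $si(Q)=0$. If there is a unique matching $M$, check whether it satisfies the lower-order-ideal condition (equivalently, whether the quotient relation on $Q/M$ is acyclic); if not, again $si(Q)=0$, and if so, Corollary~\ref{onedt} gives $si(Q)=e(Q/M)$. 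Deciding $e(Q/M)\ge k$ for fixed $k$ is then polynomial: enumerate linear extensions of $Q/M$ with polynomial delay and stop once $k$ of them have been produced or the supply is exhausted. This sidesteps exact counting entirely, which is exactly the point of the theorem and of the discussion in \S\ref{staiswrong}.
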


Note that the polynomial bound above implicitly depends on $k$.

Recently, Kravitz and Sah showed in \cite{KS21} an upper bound of $O(\log a \log \log a)$ for the minimal number of elements in a poset with $a$ linear extensions. Lemma~\ref{mainlemma} then allows us to obtain the following corollary:

\begin{corollary}\label{siconcise}
For every positive integer $a$, there exists a height 2 poset $P$ with $O( \log a \log \log a)$ elements such that $si(P) = a$.
\end{corollary}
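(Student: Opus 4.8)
The plan is to combine the Kravitz--Sah construction with the correspondence between linear extensions and sign imbalance furnished by Lemma~\ref{mainlemma}. First I would invoke the result of Kravitz and Sah \cite{KS21}: for the given positive integer $a$ there exists a poset $Q$ on $O(\log a \log \log a)$ elements whose number of linear extensions $e(Q)$ equals $a$. The goal is then to transfer this count into a sign imbalance of the same magnitude while keeping the number of elements within the same asymptotic bound and forcing the resulting poset to have height $2$.

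The key step is the reduction encoded in Lemma~\ref{mainlemma}. As the excerpt describes, one direction of that lemma gives a \emph{parsimonious} reduction from counting linear extensions of an arbitrary poset to computing the sign imbalance of a height $2$ poset; this is exactly Stachowiak's construction underlying Theorem~1 of \cite{Sta97a}. Concretely, I would apply this reduction to $Q$ to produce a height $2$ poset $P$ with $si(P) = e(Q) = a$ (up to sign, which we may arrange to be positive by the labeling freedom). The essential thing to verify is that this reduction is \emph{size-efficient}: the number of elements of $P$ must be linear (or at worst polynomial of fixed small degree, but ideally linear) in the number of elements of $Q$, so that $|P| = O(|Q|) = O(\log a \log \log a)$, preserving the target bound. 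Since the standard height $2$ encoding of a poset replaces each element by a constant number of new elements and each covering relation by a bounded gadget, I expect $|P|$ to be $O(|Q|)$, which suffices.

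The main obstacle, and the point requiring the most care, will be pinning down the exact statement and size bound of Lemma~\ref{mainlemma} as used here. I must confirm that the direction of the reduction I need sends a general poset to a height $2$ poset (rather than the reverse), that it genuinely equates the linear extension count with the sign imbalance, and that it does not blow up the element count beyond a constant factor. I would also check that any auxiliary constant-sized gadgets introduced by the reduction do not inadvertently raise the height above $2$ or perturb the sign imbalance by an unwanted factor or sign. Once these bookkeeping points are settled, the corollary follows immediately by chaining the two bounds: $|P| = O(|Q|) = O(\log a \log \log a)$ and $si(P) = a$.
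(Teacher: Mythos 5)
Your proposal matches the paper's intended argument exactly: apply the Kravitz--Sah bound to obtain $Q$ with $e(Q)=a$ on $O(\log a \log\log a)$ elements, then feed $Q$ into the construction $A(Q,R)$ of Lemma~\ref{mainlemma} to get a height~2 poset with $si(A(Q,R))=e(Q)=a$. The size bookkeeping you flag as the ``main obstacle'' is immediate from the construction, since $A(Q,R)$ has exactly $2|Q|$ elements.
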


We contrast Corollary \ref{siconcise} with the following conjecture of  Chan and Pak:

\begin{conj}[Conjecture 5.17 in \cite{CP23+}]\label{chanpakconj} For every sufficiently large integer $m$ there exists a height 2 poset $P$ such that $LE(P) = m$.
\end{conj}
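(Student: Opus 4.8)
The plan is to show that the set $S \subseteq \zee_{>0}$ of values realizable as $LE(P)$ over height $2$ posets $P$ is cofinite, by proving that $S$ contains every integer in a sequence of overlapping windows $[L_N, R_N]$ with $R_N \ge L_{N+1}$, whose union is then all large integers. Since the number of linear extensions of an $N$-element height $2$ poset is at most $N!$ and is generically of factorial growth, the natural scale of the $N$-th window is factorial; the entire difficulty is to realize \emph{every} consecutive integer inside a window rather than a sparse subset of them.

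First I would isolate the source of sparseness. The ``symmetric'' height $2$ posets — antichains ($LE = N!$), fans with one minimal below $k$ maximals ($LE = k!$), complete bipartite posets ($LE = p!\,q!$), and perfect matchings ($LE = (2k)!/2^{k}$) — together with disjoint unions only produce values of the shape $N!/d$ with $d$ a product of small integers, which are far too lumpy to hit an arbitrary large integer such as a large prime. The remedy is \emph{asymmetric} bipartite gadgets, for which $LE$ is a genuine sum rather than a single quotient: peeling a maximal element gives the additive identity $LE(P) = \sum_{z\ \text{maximal}} LE(P - z)$, and applying it to the poset with maximals $c_1, \dots, c_n$, a minimal $a < c_1, \dots, c_n$, and a minimal $b < c_1, \dots, c_j$ yields $LE = \tfrac{(n+1)!}{j+1} + n!$; the smallest instance ($n=2$, $j=1$) already realizes the non-quotient value $5$. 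I would build the window-covering family out of such additive gadgets superposed on a common frame.

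The key step, and the main obstacle, is \emph{unit} adjustability: producing, inside each window, a family of height $2$ posets whose $LE$ values run through all consecutive integers. The natural approach is a factorial (mixed-radix) decomposition $m = \sum_i d_i\, i!$ with $0 \le d_i \le i$, realized by attaching, for each $i$, exactly $d_i$ maximal elements of a prescribed down-set size to a fixed antichain, so that the peeling identity expands $LE$ as $\sum_i d_i\, i!$ with no carries. Making this precise is hard precisely because $LE$ is a positive sum admitting no cancellation: the height $2$ constraint forbids the moves (ordinal sums, tall chains) that would give clean products, while the permitted moves — adding a relation, or adding a minimal or maximal element — change $LE$ either multiplicatively or by large, mutually interacting amounts. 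Controlling the superposition of gadget contributions so that every residue is hit, and verifying that the resulting windows genuinely overlap, is the crux on which a full proof rests.

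Finally, I would contrast this with why the sign-imbalance analogue is easy, which also explains why the results of the present paper do not transfer directly. Corollary~\ref{siconcise} realizes every $a$ as $si(P)$ for a concise height $2$ poset precisely because $si$ carries a sign and admits sign-reversing involutions; indeed Lemma~\ref{mainlemma} converts an arbitrary poset with $LE = a$ into a height $2$ poset with $si = \pm a$. That cancellation is exactly what is unavailable for $LE$, so the reduction of Lemma~\ref{mainlemma} runs the wrong way for Conjecture~\ref{chanpakconj} and cannot be borrowed. For this reason I expect the unit-adjustability step above, rather than any transfer from the sign-imbalance machinery, to be where the real work lies.
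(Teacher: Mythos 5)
The statement you are addressing is Conjecture~\ref{chanpakconj}, which the paper attributes to Chan and Pak and explicitly leaves \emph{open}: there is no proof in the paper to compare against, and your proposal does not close the gap either. You flag the crux yourself: the ``unit adjustability'' step --- realizing every integer in a window via a carry-free mixed-radix expansion $m = \sum_i d_i\, i!$ by superposing gadgets on a common frame --- is asserted as a plan, not constructed. Everything preceding it (the factorial scale of the windows, the peeling identity $e(P) = \sum_{z\ \mathrm{maximal}} e(P - z)$, the arithmetic of the two-minimal gadget giving $(n+1)!/(j+1) + n!$, with the instance $5$ for $n=2$, $j=1$) is correct, but it is the easy part; without a family whose $e$-values provably sweep out all consecutive integers in genuinely overlapping intervals, nothing is proved.

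Moreover, the paper's own partial results show the missing step cannot be routine, and your sketch does not engage with them. By Lemma~\ref{mainlemma} and the Proposition of \S5.1, a height~$2$ poset on $2n$ vertices with an \emph{odd} number of linear extensions must satisfy $\bigcup_{i=1}^n C_2 \subseteq P \subseteq A_n \oplus A_n$, forcing $(n!)^2 \le e(P) \le n!\,(2n-1)!!$, while such a poset on $2n+1$ vertices has an isolated vertex and hence $(2n+1) \mid e(P)$. For small $n$ these intervals fail to overlap (the overlap condition is $(n+1)^2\, 2^n \le \binom{2n}{n}$, which fails for $n \le 9$), and this is exactly how the paper exhibits $10!^2 - 1$ as unattainable. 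So for odd targets your windows are far narrower than $[L_N, N!]$, they genuinely do not overlap at small scales, and your gadget inventory is constrained: any poset hitting an odd $m$ must admit a domino tableau that is a perfect matching of its Hasse diagram, which rules out most of the asymmetric pieces you propose (a fan below $k \ge 2$ maximals contributes $k!$, which is even; disjoint unions multiply $e$ by binomial coefficients, which are typically even). A completed argument would have to carry out the consecutive-integer construction inside $[(n!)^2, n!\,(2n-1)!!]$ for large $n$ while respecting this matching structure --- and that is precisely the open content of the conjecture, not a detail to be deferred.
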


Note that without the ``height 2" condition, Conjecture~\ref{chanpakconj} would be trivial, as $C_{m - 1} + C_1$ has $m$ linear extensions. Furthermore, since a height 2 poset on $n$ elements must have at least $(n/ 2)!^2$ linear extensions, a positive resolution of Conjecture~\ref{chanpakconj} would imply a logarithmic bound similar to that of Conjecture~\ref{siconcise} or \cite{KS21}.

We note that the number of linear extensions of height 2 posets is not equally distributed among odd and even numbers. Let $f(n)$ be the number of height 2 posets on $n$ elements which have an \textit{odd} number of linear extensions.

Given a poset $P =(X, \prec)$, we define  
\begin{align*}
re(P) &:= \#\{i, j \in X : i \prec j \}\\
cr(P) &:= \#\{i, j \in X : i \text{ covers } j \}
\end{align*}
 Equivalently, $re(P)$ is the number of edges in the comparability graph and $cr(P)$ is the number of edges in the Hasse diagram of $P$. Also, for any positive integer $k$ we let $\mathcal{O}_k$ be the set of posets on $k$ elements with an odd number of linear extensions.

\begin{theorem}\label{oddles}
Then for every $n \geq 1$ we have
\[
f(2n + 1) = f(2n) = \sum_{P \in \mathcal{O}_n} 2^{re(P) - cr(P)}
\]

and

\[
2^{\binom{n - 1}{2}} < f(2n) < 2^{\binom{n}{2}}
\]
\end{theorem}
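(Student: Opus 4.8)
The plan is to reduce both displayed statements to a single structural description of the odd-LE height $2$ posets as the image of the doubling construction underlying Lemma~\ref{mainlemma}. Encode a balanced height $2$ poset by its minimal elements $X=\{x_1,\dots,x_n\}$, maximal elements $Y=\{y_1,\dots,y_n\}$, and the relation matrix $R$ with $R_{ij}=1$ iff $x_i\prec y_j$. For a poset $P$ on $[n]$ and a set $S$ of \emph{non-cover} comparable pairs of $P$, let $\Phi(P,S)$ be the height $2$ poset on $\{x_1,y_1,\dots,x_n,y_n\}$ in which $x_i\prec y_j$ exactly when $i=j$, or $i\lessdot_P j$, or $(i,j)\in S$. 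My central claim is that $\Phi$ is a bijection from pairs $(P,S)$ with $P\in\mathcal{O}_n$ onto the odd-LE height $2$ posets on $2n$ elements; since the admissible $S$ number $2^{re(P)-cr(P)}$ (the non-cover pairs are exactly the $re(P)-cr(P)$ comparabilities that are not covers), this gives the middle equality, and the two flanking statements follow by lighter arguments.

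I would first dispose of $f(2n+1)=f(2n)$. If $Q=Q'\sqcup\{p\}$ has an isolated point $p$ and $|Q|$ is even, then $LE(Q)=|Q|\cdot LE(Q')$ is even; hence an odd-LE height $2$ poset on $2n$ elements has no isolated point and its minimal and maximal elements partition the ground set. I also need that an \emph{unbalanced} such poset has even $LE$: via the domino interpretation of Lemma~\ref{quotients}, a height $2$ poset whose minimal and maximal elements cannot be perfectly matched has sign imbalance $0$, hence even $LE$. Consequently an odd-LE height $2$ poset on $2n+1$ elements has exactly one isolated point (two would force $LE$ even by the first observation after deleting one; none would force an unbalanced partition, again even), and deleting that point is a bijection onto the odd-LE height $2$ posets on $2n$ elements, giving $f(2n+1)=f(2n)$.

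The heart of the argument is the bijection $\Phi$. The forward inclusion—that every $\Phi(P,S)$ with $P\in\mathcal{O}_n$ has odd $LE$—is one direction of Lemma~\ref{mainlemma} when $S=\emptyset$; for general $S$ I would show that inserting a redundant cross-relation $x_i\prec y_j$ coming from a non-cover pair changes $LE$ by an even number, by exhibiting a fixed-point-free involution on the linear extensions in which $y_j$ precedes $x_i$ (using an intermediate $k$ with $i\prec_P k\prec_P j$). The substantive reverse direction recovers $(P,S)$ from an odd-LE poset $Q$: the key is that the relation matrix $R$ can be brought to \emph{unit upper-triangular} form by a single relabeling, after which the superdiagonal splits canonically into the forced covers of $P$ and a free set $S$. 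I expect this canonical triangularization to be the main obstacle: one must prove it exists and is unique exactly when $LE$ is odd, and that no nontrivial automorphism of $P$ identifies two choices of $S$. The latter should follow from the fact that a nontrivial automorphism acts freely on linear extensions and hence forces $LE(P)$ even, which both underlies the uniqueness and guarantees $\Phi$ is well defined on isomorphism classes.

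Finally I would read the bounds off the normal form. The injection $Q\mapsto R$ into unit upper-triangular $0/1$ matrices gives $f(2n)\le 2^{\binom{n}{2}}$, and it is strict because, for instance, the identity matrix (the disjoint union of $n$ dominoes) has even $LE$ and lies outside the image, so $f(2n)<2^{\binom{n}{2}}$. For the lower bound, the chain $C_n\in\mathcal{O}_n$ satisfies $re(C_n)-cr(C_n)=\binom{n}{2}-(n-1)=\binom{n-1}{2}$; since $\mathrm{Aut}(C_n)$ is trivial, injectivity of $\Phi$ makes the $2^{\binom{n-1}{2}}$ choices of $S$ yield pairwise non-isomorphic odd-LE posets, whence $f(2n)\ge 2^{\binom{n-1}{2}}$. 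Strictness for $n\ge 3$ comes from any further member of $\mathcal{O}_n$ (one checks $\mathcal{O}_n\supsetneq\{C_n\}$, e.g.\ using a copy of the fence $Z_4$ together with a chain), the strict bounds degenerating only for $n\le 2$, where the equalities can be verified directly.
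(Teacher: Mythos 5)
Your overall route is the same as the paper's: reduce $f(2n+1)$ to $f(2n)$ via the forced isolated vertex, then count odd-LE height $2$ posets on $2n$ elements as images of the doubling construction of Lemma~\ref{mainlemma} (your $\Phi(P,S)$ is exactly $A(P,R)$ with $R$ the diagonal, the covers, and $S$), with the lower bound coming from $C_n$ and the upper bound from the $2^{\binom{n}{2}}$ possible relation matrices. Two of your intermediate steps are detours around material you could simply cite. The forward direction needs no new involution for nonempty $S$: Lemma~\ref{mainlemma} already gives $si(A(P,R))=e(P)$ for \emph{every} good $R$, and $e\equiv si \pmod 2$ finishes it; the involution you sketch (pairing extensions in which $y_j$ precedes $x_i$ via an intermediate $k$) is unnecessary and not obviously constructible. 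Likewise your ``canonical triangularization'' is precisely the converse half of Lemma~\ref{mainlemma}: a non-sign-balanced height $2$ poset has a \emph{unique} perfect matching of its Hasse diagram, and quotienting by it recovers $(P,S)$. You flag this as the main obstacle, but it is already proved in the paper; note also that your ``exists and is unique exactly when $LE$ is odd'' overstates it --- uniqueness holds whenever $Q$ is not sign-balanced, which is weaker than $e(Q)$ odd, and only the implication from odd $e(Q)$ is needed.

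The genuine gap is your justification of injectivity up to isomorphism. For the exact formula you need that, for $P\in\mathcal{O}_n$, no nontrivial automorphism of $P$ carries one admissible $S$ to a different one, and you derive this from the claim that a nontrivial automorphism acts freely on $LE(P)$ and ``hence forces $LE(P)$ even.'' That inference is invalid: a free action of a cyclic group of odd order $k>1$ only shows $k\mid e(P)$, which is perfectly compatible with $e(P)$ odd. So an automorphism of order $3$, say, is not excluded by your argument, and if such an automorphism moved a non-cover relation, the sum $\sum_{P\in\mathcal{O}_n}2^{re(P)-cr(P)}$ would overcount isomorphism classes. (Order-$2$ automorphisms are genuinely excluded by your argument, but that is not enough, since the automorphism group could have odd order.) The paper dispatches this with ``it is easy to see that $A$ is injective,'' which suffices for recovering the pair $(P,R)$ and hence for the upper bound, but the finer statement needed for the exact formula is exactly the point you isolated; your proposed fix does not close it. The rest --- the isolated-vertex argument for $f(2n+1)=f(2n)$, the count $re(C_n)-cr(C_n)=\binom{n-1}{2}$ with $C_n$ rigid, and your observation that the strict inequalities degenerate for $n\le 2$ --- is correct.
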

See \S\ref{basicexamples} for an example when $n = 3$. Note that the bounds for $f$ imply that nearly all posets of height 2 have an even number of linear extensions. A similar result holds for all primes:

\begin{theorem}\label{forallq}
Let $q > 1$ be an prime.  Let $f_q(m)$  be the number of height 2 posets $P$ on $m$ vertices such that $q \nmid e(P)$. Then
\[
f_q(m) \leq 2^{\frac{q - 1}{4q}  m^2 + O(m)}
\]
\end{theorem}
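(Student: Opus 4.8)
The plan is to reduce the enumeration to a problem about bipartite $0$--$1$ matrices and then to isolate the arithmetic constraint imposed by $q\nmid e(P)$. First I would encode a height $2$ poset by its two levels: a set $A$ of minimal elements, a set $B$ of maximal elements (isolated points may be assigned to either side, which affects the count only by a $2^{O(m)}$ factor), together with the comparability relation as a matrix $M\in\{0,1\}^{A\times B}$, where $M_{ab}=1$ iff $a\prec b$. Writing $|A|=p$, $|B|=r$ with $p+r=m$, the number of such posets for a fixed split is $2^{pr}$, and the choice of split contributes another factor $2^{O(m)}$. Since $pr\le m^2/4$ with equality at $p=r=m/2$, the target $2^{\frac{q-1}{4q}m^2+O(m)}$ follows once I show that, for each split, the number of matrices $M$ with $q\nmid e(P)$ is at most $2^{\frac{q-1}{q}pr+O(m)}$.

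The entry point is a divisibility observation refining the sign-balance argument of \cite{Rus88}. If two minimal elements have the same up-set, they are fully interchangeable, so the symmetric group permuting them acts freely on the set of linear extensions; likewise for maximal elements sharing a down-set. The resulting free action of $\prod_j S_{t_j}\times\prod_k S_{u_k}$ shows that $q\nmid e(P)$ forces every class of equal rows and every class of equal columns of $M$ to have size at most $q-1$. This is a genuine necessary condition, but by itself it is far too weak in the balanced regime (a random matrix already satisfies it), so the heart of the argument must extract the finer structure of $e(P)\bmod q$.

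The decisive step is to generalise the parametrisation behind Theorem~\ref{oddles} from the prime $2$ to an arbitrary prime $q$. For $q=2$ that theorem exhibits each height $2$ poset with odd $e$ as a ``doubling'' of a poset $P$ on $\sim m/2$ elements with odd $e(P)$, carrying $2^{re(P)-cr(P)}$ binary choices, so that the extremal object is the chain, contributing $2^{\binom{n}{2}-O(n)}\approx 2^{m^2/8}$. For general $q$ I would replace the sign-reversing involution of \cite{Sta05} by a genuine $\mathbb{Z}/q$-action on the set of linear extensions --- built from promotion, or from cyclic relabelling on interchangeable blocks --- and use the elementary fact that for a prime $q$ the cardinality of a $\mathbb{Z}/q$-set is congruent mod $q$ to its number of fixed points. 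This gives $e(P)\equiv\#\mathrm{Fix}\pmod q$, so $q\nmid e(P)$ forces fixed configurations to exist, which I expect to pin $P$ down as a $q$-analogue of the doubling: $P$ is determined by a quotient poset on $\sim m/2$ elements together with a bounded number of local choices per comparable pair. Summing the resulting multiplicities --- again dominated by the chain --- and calibrating the per-pair multiplicity so that the chain contributes exactly $2^{\frac{q-1}{4q}m^2+O(m)}$ would complete the count.

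The main obstacle is precisely this refinement: upgrading the easy interchangeability bound (classes of size $<q$) to a tight reduction of the degrees of freedom by a factor $1/q$. Concretely, I must (i) construct a $\mathbb{Z}/q$-action whose fixed points admit a clean combinatorial description --- the order of promotion is generally not a multiple of $q$, so either some care with cyclic sieving or a different action adapted to the bipartite structure is required --- and (ii) show that this description confines $M$ to a set of size $2^{\frac{q-1}{q}pr+O(m)}$ with the correct constant, rather than the larger constant that a naive per-pair count would suggest. Controlling the fixed-point structure sharply enough to obtain the exponent $\frac{q-1}{4q}$ (and not merely $O(m^2)$) is the crux; the reduction to matrices and the final summation over splits are routine by comparison.
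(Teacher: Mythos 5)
Your reduction to bipartite $0$--$1$ matrices and the calibration of the target exponent (at most $2^{\frac{q-1}{q}pr+O(m)}$ matrices per split, with $pr\le m^2/4$) is sound bookkeeping, but the proof has a genuine gap exactly where you flag it: the structural statement converting $q\nmid e(P)$ into the loss of a constant fraction of the $pr$ binary degrees of freedom is never established. The interchangeability bound (classes of equal rows or columns have size $<q$) is, as you concede, satisfied by almost every matrix and yields nothing. The proposed replacement --- a $\mathbb{Z}/q$-action on $LE(P)$ built from promotion or from cyclic relabelling, whose fixed points would pin $P$ down as a ``$q$-analogue of doubling'' --- is stated as an expectation rather than constructed: promotion has no reason to have order divisible by $q$ (you note this yourself), and no concrete action with a describable fixed-point set is exhibited. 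Without that lemma the argument gives no bound beyond the trivial $2^{m^2/4+O(m)}$, so the theorem is not proved.

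The missing idea is close in spirit to your ``cyclic relabelling on blocks,'' but it should be applied to blocks of $q$ \emph{consecutive labels} in a linear extension rather than to interchangeable elements. Fix any $\ell\in LE(P)$ and cut $[m]$ into consecutive blocks of size $q$; if some block induces a disconnected subposet $Q_1+Q_2$, then $e(Q_1+Q_2)=\binom{q}{|Q_1|}e(Q_1)e(Q_2)$ is divisible by the prime $q$, and grouping linear extensions by their restriction to the complement of the first disconnected block partitions the ``bad'' extensions into classes of size divisible by $q$ (this is Lemma~\ref{easylemma}). Hence $q\nmid e(P)$ forces a linear extension all of whose blocks induce \emph{connected} subposets, so $P$ is assembled from $\lfloor m/q\rfloor$ connected height~$2$ posets on $q$ vertices; external edges can only run from bottom vertices of earlier blocks to top vertices of later blocks, and a connected block has at most $q-1$ bottoms (resp.\ tops). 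Optimizing over block shapes caps the number of admissible external edges at $\frac{q-1}{4q}m^2+O(m)$, which is the whole bound. If you substitute this ``adapted linear extension'' argument for the hypothetical group action, your remaining framework (the two-level matrix encoding and the sum over splits) becomes unnecessary, since the block decomposition already performs the count.
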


For comparison, note that there are $2^{\frac{1}{4}m^2 + O(m)}$ total height 2 posets on $m$ vertices. In the case $q = 2$, Theorems \ref{oddles} and \ref{forallq} agree on an asymptotic $2^{\frac{1}{8} m^2 + O(m)}$.

\section{Definitions and examples}
We use the notation $[n] := \{1, 2, \dots, n\}$. Also, $C_n$ and $A_n$ will denote chain posets and antichain on $n$ elements, respectively. A \emph{lower order ideal} of a poset $P = (X, \prec)$ is a subset $Y \subseteq X$ such that $y \in Y, x \prec y \implies x \in Y$.

\subsection{Posets and linear extensions}\label{basics} We will assume familiarity with basic notions of posets (see e.g. \cite[\S 3]{Sta97} or surveys \cite{BW00, Tro95}). Suppose $P$ is a poset $(X, \prec)$, where $X$ has $n$ elements. Then a \emph{linear extension} of $P$ is a bijection $\ell: X \to [n]$ such that $\ell(x_1) < \ell(x_2)$ whenever $x_1 \prec x_2$. We describe the linear extension as \emph{assigning} the labels in $[n]$ to the elements of $P$. We denote the number of linear extensions of $P$ by $e(P)$ and the set of linear extensions by $LE(P)$.

Fix an arbitrary bijection $f: X \mapsto n$. Then every linear extension corresponds to either an odd or an even permutation. We define the \defn{sign imbalance} as

\begin{equation}\label{sidefn}
si(P) := \left| \sum_{\ell \in LE(P)} \sgn(\ell) \right|
\end{equation}
It is easy to show that $si(P)$ is independent of the choice of $f$.  Thus we will suppress the dependence on $f$. A poset $P$ for which $si(P) = 0$ is called \defn{sign-balanced}. For example, the following poset has $e(P) = 61$ and $si(P) = 1$:

\vspace{5mm}
\begin{center}
\begin{tikzpicture}

\begin{scope}[every node/.style={circle,thick,draw, minimum size = 0cm, inner sep=1pt, fill=black}]
\node (A1) at (0, 5) {};
\node (B1) at (1, 6) {};
\node (A2) at (2, 5) {};
\node (B2) at (3, 6) {};
\node (A3) at (4, 5) {};
\node (B3) at (5, 6) {};
\end{scope}

\path (A1) edge node {} (B1)
(A2) edge node {} (B1)
(A2) edge node {} (B2)
(A3) edge node {} (B2)
(A3) edge node {} (B3);

\end{tikzpicture}
\end{center}
\vspace{5mm}
We note that $e(P) \equiv si(P) \mod 2$ for all posets $P$.

Furthemore, we denote by $P \oplus Q$ the ordinal sum (linear sum) of the posets and by $P + Q$ the disjoint union (parallel sum). Also, a poset is \emph{disconnected} if its Hasse diagram is disconnected.

\subsection{Domino tableaux and quotients} Given a poset $P = (X, \prec)$ with $n$ elements, a \emph{domino tableau} $M$ is a set partition  of $X$ such that:
\begin{enumerate}
\item Every part is a chain of length 2 except for possibly one chain of length 1,
\item If there is a chain of length 1, then it is a maximal element,
\item There exists an ordering $X_1, \ldots, X_k$ of the parts of $M$ such that for all $1 \leq j \leq k$, the set $ X_i \cup \cdots \cup X_j$ is a lower order ideal.
\end{enumerate}

Condition (1) is equivalent to saying that $M$ is a perfect matching in the Hasse diagram plus possibly one extra vertex. We say that a linear extension $\ell$ of $P$ is \emph{adapted} to $M$ when $i$ and $i + 1$ are assigned to same part of $X$ for all odd $i < n$. (If $n$ is odd, then the label $n$ will be assigned to the singleton vertex). Condition (3) implies that there is a linear extension $f \in LE(P)$ adapted to $M$. This can be constructed by assigning $1$ and $2$ to $X_1$, then $3$ and $4$ to $X_2$, and so on. We denote by $DT(P)$ the set of all domino tableaux of $P$

For example, consider the following pair of posets. The left poset has a highlighted domino tableau with an adapted linear extension. The right poset does not admit a domino tableau even though the Hasse diagram does have a perfect matching. Indeed, suppose we match $a$ with $d$ and $b$ with $e$. We cannot put $(a, d)$ before $(b, e)$ because $b \prec d$. And we cannot put $(b, e)$ before $(a, d)$ because $a \prec e$.

\vspace{5mm}
\begin{center}
\begin{tikzpicture}

\begin{scope}[every node/.style={circle,thick,draw, minimum size = 0cm, inner sep=1pt, fill=black}]
\node (A1) at (0, 5) {};
\node (B1) at (1, 6) {};
\node (A2) at (2, 5) {};
\node (B2) at (3, 6) {};
\node (A3) at (4, 5) {};
\node (B3) at (5, 6) {};

\node (C1) at (8, 5) {};
\node (D1) at (9, 6) {};
\node (C2) at (10, 5) {};
\node (D2) at (11, 6) {};
\node (C3) at (12, 5) {};
\node (D3) at (13, 6) {};

\end{scope}

\path (A1) edge node {} (B1)
(A2) edge node {} (B1)
(A2) edge node {} (B2)
(A3) edge node {} (B3);

\path[ultra thick] (A1) edge node {} (B1)
(A2) edge node {} (B2)
(A3) edge node {} (B3);

\path (C1) edge node {} (D1)
(C2) edge node {} (D1)
(C1) edge node {} (D2)
(C2) edge node {} (D2)
(C3) edge node {} (D3);

\tkzLabelPoint[below](A1){$5$}
\tkzLabelPoint[below](A2){$1$}
\tkzLabelPoint[below](A3){$3$}

\tkzLabelPoint[above](B1){$6$}
\tkzLabelPoint[above](B2){$2$}
\tkzLabelPoint[above](B3){$4$}

\tkzLabelPoint[below](C1){$a$}
\tkzLabelPoint[below](C2){$b$}
\tkzLabelPoint[below](C3){$c$}

\tkzLabelPoint[above](D1){$d$}
\tkzLabelPoint[above](D2){$e$}
\tkzLabelPoint[above](D3){$f$}

\end{tikzpicture}
\end{center}
\vspace{5mm}

It is not hard to show that any two linear extensions adapted to the same domino tableau must have the same sign. Therefore, we define the sign of a domino tableau to be the sign of the linear extensions which are adapted to it.

Given a poset $P$ with a domino tableau $M$, we can construct the \emph{quotient poset} $P/M$ as follows. The vertices of $P/M$ are the elements of $M$. The comparisons of $P/M$ are generated by relations of the form
\[
X_1 \preccurlyeq X_2 \text{ in } P/M \qquad \iff \qquad x_1 \preccurlyeq x_2 \text{ in } P \text{ for some } x_1 \in X_1, x_2 \in X_2
\]

Condition (3) implies that this defines a valid poset structure for $P/M$. As an example, the left poset in the diagram above has $P/M$ isomorphic to $C_2 + C_1$.

\section{Lemmas}

The following lemma is based on a standard involution (see for instance  \cite[Lem. 3]{Rus92}, \cite[Thm. 1]{Sta97a} , \cite[\S5]{Whi01}, and \cite[Corr. 4.2] {Sta05}).

\begin{lemma}\label{quotients}
Let $P$ be a poset. Then 
\begin{equation} \label{sirecursion}
si(P) = \left| \sum_{M \in DT(P)} \sgn(M)\ e(P/M) \right|
\end{equation}
\end{lemma}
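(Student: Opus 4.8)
The plan is to establish a sign-preserving, weight-counting bijection between linear extensions of $P$ and pairs consisting of a domino tableau together with a compatible linear extension of the quotient, so that the alternating sum over linear extensions factors as a sum over domino tableaux.

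First I would partition $LE(P)$ according to which domino tableau each extension is adapted to. The key observation is that every linear extension $\ell$ canonically determines a domino tableau: group together the elements receiving labels $\{1,2\}, \{3,4\}, \dots$, and (if $n$ is odd) leave the top element as a singleton. Because consecutive labels must be assigned to comparable-or-incomparable pairs under a valid extension, I must check that each such grouping does indeed satisfy the three defining conditions of a domino tableau — conditions (1) and (2) are immediate from the construction, and condition (3) follows because the union of the first $j$ blocks is exactly $\ell^{-1}(\{1,\dots,2j\})$, which is a lower order ideal of $P$. This gives a well-defined map $LE(P) \to DT(P)$, and the fiber over a fixed $M$ consists precisely of the linear extensions adapted to $M$.

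Next I would identify each fiber with $LE(P/M)$. Given $M \in DT(P)$, a linear extension adapted to $M$ assigns the pair of labels $\{2j-1, 2j\}$ to a single block; the order in which the blocks receive their label-pairs is exactly a linear extension of the quotient poset $P/M$, whose order relations were defined so that $X \preccurlyeq X'$ iff some element of $X$ lies below some element of $X'$. Conversely any linear extension of $P/M$ lifts back to a unique adapted extension of $P$ (within each size-2 block the two labels are forced by the internal chain order). Hence $|\{\ell \in LE(P) : \ell \text{ adapted to } M\}| = e(P/M)$. Crucially, the paper has already noted that all linear extensions adapted to a fixed $M$ share a common sign, which it defines to be $\sgn(M)$; so the signed count of the fiber is exactly $\sgn(M)\,e(P/M)$.

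Combining these, I would write
\[
\sum_{\ell \in LE(P)} \sgn(\ell) \;=\; \sum_{M \in DT(P)} \ \sum_{\substack{\ell \in LE(P) \\ \ell \text{ adapted to } M}} \sgn(\ell) \;=\; \sum_{M \in DT(P)} \sgn(M)\, e(P/M),
\]
and taking absolute values yields \eqref{sirecursion}. I expect the main obstacle to be the careful verification that the canonical map $LE(P)\to DT(P)$ genuinely lands in $DT(P)$ — that is, that the blocks induced by an arbitrary linear extension always satisfy the chain-and-maximality conditions (1)–(2) as well as the ideal condition (3). The lift/descent correspondence with $LE(P/M)$ and the constancy of sign on each fiber are the standard involution ingredients flagged in the references, so those should be routine once the fiber decomposition is set up correctly.
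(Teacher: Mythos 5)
There is a genuine gap at the very first step. You assert that \emph{every} linear extension $\ell$ canonically determines a domino tableau by grouping the elements labeled $\{1,2\},\{3,4\},\dots$, and that condition (1) of the definition ``is immediate from the construction.'' It is not: condition (1) requires each block to be a \emph{chain} of length 2, i.e.\ $\ell^{-1}(2j-1)$ and $\ell^{-1}(2j)$ must be comparable in $P$, and for a typical linear extension some consecutive pair of labels lands on incomparable elements. (Already for $P=A_2$ both linear extensions fail this, and $DT(P)=\varnothing$.) Consequently your map $LE(P)\to DT(P)$ is only partially defined, the fibers over $DT(P)$ do not partition $LE(P)$, and the first equality in your displayed chain
\[
\sum_{\ell \in LE(P)} \sgn(\ell) \;=\; \sum_{M \in DT(P)} \ \sum_{\substack{\ell \in LE(P) \\ \ell \text{ adapted to } M}} \sgn(\ell)
\]
is false as a statement about a partition: it omits all the non-adapted extensions.

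The missing idea, which is the heart of the paper's proof, is a sign-reversing involution $\Phi$ on the non-adapted extensions: given $\ell$, take the smallest odd $j$ such that $\ell^{-1}(j)$ and $\ell^{-1}(j+1)$ are incomparable and swap the labels $j$ and $j+1$. This is again a linear extension (precisely because the two elements are incomparable), it has opposite sign, and it is an involution fixing exactly the adapted extensions. This cancellation is what justifies restricting the signed sum to the adapted extensions, after which your fiber identification with $LE(P/M)$ and the constancy of sign on each fiber (both of which you handle correctly, and which match the paper) finish the argument. Without the involution the proof does not go through.
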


\begin{proof}
We construct an involution $\Phi$ on $LE(P)$ where $P = (X, <)$. Suppose $P$ has $n$ vertices and consider a linear extension $\ell$ of $P$. Define the set $S$ to be the set of all odd integers $i \in [n - 1]$ such that $\ell$ assigns $i$ and $i + 1$ to incomparable elements of $P$.

If $S$ is the empty set, then we let $\Phi(\ell) = \ell$. Otherwise, let $j$ be the smallest element of $S$. Then construct $\Phi(\ell)$ by switching the labels $j$ and $j + 1$. By assumption, this is still a valid linear extension. Moreover, it has opposite sign to $\ell$. Therefore the terms corresponding to $\ell$ and $\Phi(\ell)$ will cancel out in the sum (\ref{sidefn}).

Here is an example of $\Phi$. Since $3$ is the smallest odd number not comparable to its successor, it gets switched with $4$.

\vspace{5mm}
\begin{center}
\begin{tikzpicture}

\begin{scope}[every node/.style={circle,thick,draw, minimum size = 0cm, inner sep=1pt, fill=black}]
\node (A1) at (0, 5) {};
\node (B1) at (1, 6) {};
\node (A2) at (2, 5) {};
\node (B2) at (3, 6) {};
\node (A3) at (4, 5) {};
\node (B3) at (5, 6) {};

\node (C1) at (8, 5) {};
\node (D1) at (9, 6) {};
\node (C2) at (10, 5) {};
\node (D2) at (11, 6) {};
\node (C3) at (12, 5) {};
\node (D3) at (13, 6) {};

\end{scope}

\path (A1) edge node {} (B1)
(A2) edge node {} (B1)
(A2) edge node {} (B2)
(A3) edge node {} (B3);

\path (C1) edge node {} (D1)
(C2) edge node {} (D1)
(C2) edge node {} (D2)
(C3) edge node {} (D3);

\tkzLabelPoint[below](A1){$\boxed{4}$}
\tkzLabelPoint[below](A2){$1$}
\tkzLabelPoint[below](A3){$\boxed{3}$}

\tkzLabelPoint[above](B1){$5$}
\tkzLabelPoint[above](B2){$2$}
\tkzLabelPoint[above](B3){$6$}

\tkzLabelPoint[below](C1){$\boxed{3}$}
\tkzLabelPoint[below](C2){$1$}
\tkzLabelPoint[below](C3){$\boxed{4}$}

\tkzLabelPoint[above](D1){$5$}
\tkzLabelPoint[above](D2){$2$}
\tkzLabelPoint[above](D3){$6$}

\tkzLabelPoint[above](6.5, 5.2){$\iff$}

\end{tikzpicture}
\end{center}
\vspace{5mm}

We are left only with fixed points of $\Phi$. Suppose $\ell$ is a fixed point. Then $\ell$ is adapted to a unique domino tableau $M$ formed by partitioning $X$ into $\{\ell^{-1}(1), \ell^{-1}(2) \}, \{\ell^{-1}(3), \ell^{-1}(4)\}, \cdots$. This tableau will by definition have $\sgn(M) = \sgn(\ell)$. We can define a linear extension $\ell'$ on $P/M$ by assigning the label $i$ to the subset containing $2i - 1$. This is illustrated in the picture below.

\vspace{5mm}
\begin{center}
\begin{tikzpicture}

\begin{scope}[every node/.style={circle,thick,draw, minimum size = 0cm, inner sep=1pt, fill=black}]
\node (A1) at (0, 5) {};
\node (B1) at (1, 6) {};
\node (A2) at (2, 5) {};
\node (B2) at (3, 6) {};
\node (A3) at (4, 5) {};
\node (B3) at (5, 6) {};

\node[blue] (C1) at (9, 5) {};
\node[red] (D1) at (9, 6) {};
\node[green] (C2) at (10, 5.5) {};

\end{scope}

\path (A1) edge node {} (B1)
(A2) edge node {} (B1)
(A2) edge node {} (B2)
(A3) edge node {} (B3);

\path[ultra thick] (A1) edge[red] node {} (B1)
(A2) edge[blue] node {} (B2)
(A3) edge[green] node {} (B3);

\path (C1) edge node {} (D1);

\tkzLabelPoint[below](A1){$5$}
\tkzLabelPoint[below](A2){$1$}
\tkzLabelPoint[below](A3){$3$}

\tkzLabelPoint[above](B1){$6$}
\tkzLabelPoint[above](B2){$2$}
\tkzLabelPoint[above](B3){$4$}

\tkzLabelPoint[below](C1){$1$}
\tkzLabelPoint[above](D1){$3$}
\tkzLabelPoint[above](C2){$2$}

\tkzLabelPoint[above](6.5, 5.2){$\longrightarrow$}

\end{tikzpicture}
\end{center}
\vspace{5mm}

This constitutes a bijection between linear extensions of $P$ which are adapted to $M$ and $LE(P/M)$. Since every linear extension adapted to a domino tableau is also a fixed point of $\Phi$, we obtain the formula (\ref{sirecursion}) \end{proof}

We give an example of Lemma \ref{sirecursion}.  Consider the poset $P$ below. It has two perfect matchings $\{Y_1, Y_2, Y_3, Y_4\}$ and $\{Z_1, Z_2, Z_3, Z_4\}$, both of which are domino tableaux, illustrated in red and blue below:

\vspace{5mm}
\begin{center}
\begin{tikzpicture}

\begin{scope}[every node/.style={circle,thick,draw, minimum size = 0cm, inner sep=1pt, fill=black}]
\node (A) at (3, 6) {};
\node (B) at (1, 5) {};
\node (C) at (1, 3) {};
\node (D) at (2, 2) {};
\node (E) at (3, 3) {};
\node (F) at (4, 4) {};
\node (G) at (5, 2) {};
\node (H) at (5, 5) {};
\end{scope}

\path[red, thick] (A) edge node[midway, above, black] {$Y_1$} (B)
(C) edge node[midway, below=2pt, near start,  black] {$Y_2$} (D)
(E) edge node[midway, below=2pt, near end,  black] {$Y_3$} (F)
(G) edge node[midway, right, black] {$Y_4$} (H);

\path[blue,  thick] 
(B) edge node[midway, left, black] {$Z_1$} (C)

(D) edge node[midway, below=2pt, black, near end] {$Z_2$} (E)

(F) edge node[midway, left, near end,  black] {$Z_3$} (G)

(H) edge node[midway, above, black] {$Z_4$} (A);

\end{tikzpicture}
\end{center}
\vspace{5mm}

The two quotient posets (let us call them $Y$ and $Z$) are not isomorphic:

\vspace{5mm}
\begin{center}
\begin{tikzpicture}

\begin{scope}[every node/.style={circle,thick,draw, minimum size = 0cm, inner sep=1pt, fill=black}]
\node[red, ultra thick] (A1) at (1, 2) {};
\node[red, ultra thick] (B1) at (1, 3) {};
\node[red, ultra thick] (C1) at (2, 2) {};
\node[red, ultra thick] (D1) at (2, 3) {};
\node[blue, ultra thick] (E2) at (5, 1.5) {};
\node[blue, ultra thick] (F2) at (4, 2.5) {};
\node[blue, ultra thick] (G2) at (6, 2.5) {};
\node[blue, ultra thick] (H2) at (5, 3.5) {};
\end{scope}

\tkzLabelPoint[below](A1){$Y_2$}
\tkzLabelPoint[above](B1){$Y_1$}
\tkzLabelPoint[below](C1){$Y_4$}
\tkzLabelPoint[above](D1){$Y_3$}

\tkzLabelPoint[below](E2){$Z_2$}
\tkzLabelPoint[left](F2){$Z_1$}
\tkzLabelPoint[right](G2){$Z_3$}
\tkzLabelPoint[above](H2){$Z_4$}

\path (A1) edge node {} (B1)
(A1) edge node {} (D1)
(C1) edge node {} (D1)
(C1) edge node {} (B1);

\path
(E2) edge node {} (F2)

(E2) edge node {} (G2)

(F2) edge node {} (H2)

(G2) edge node {} (H2);

\end{tikzpicture}
\end{center}
\vspace{5mm}
The red quotient $Y$ has $e(P/M) = 4$, and the blue quotient $Z$ has $e(P/M) = 2$. Since the two tableaux have opposite signs, we get $si(P) = |4 - 2| = 2$.

If $P$ has very few domino tableaux, then we can reduce the problem of finding the sign imbalance of $P$ to smaller posets:

\begin{corollary} \label{onedt}
If $P$ is a poset which does not admit a domino tableau, then $P$ is sign-balanced. If $P$ is a poset with a unique domino tableau $M$, then $si(P) = e(P/ M)$.
\end{corollary}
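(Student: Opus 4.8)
The plan is to apply Lemma~\ref{quotients} directly, since both assertions are simply the two degenerate cases of the summation formula~(\ref{sirecursion}) in which the index set $DT(P)$ has either zero or exactly one element. No new machinery is needed beyond recognizing how that sum specializes.

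First I would dispose of the case where $P$ admits no domino tableau. Then $DT(P) = \varnothing$, so the sum over $M \in DT(P)$ in~(\ref{sirecursion}) is the empty sum and hence equals $0$. Taking absolute values yields $si(P) = 0$, which is exactly the statement that $P$ is sign-balanced.

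Next I would treat the case of a unique domino tableau $M$. Here the sum in~(\ref{sirecursion}) collapses to the single term $\sgn(M)\,e(P/M)$, giving
\[
si(P) = \bigl| \sgn(M)\, e(P/M) \bigr| = |\sgn(M)| \cdot |e(P/M)| = e(P/M).
\]
The only small points to check are that $\sgn(M) \in \{+1,-1\}$, which holds because $\sgn(M)$ is by definition the sign of a permutation (the common sign of any linear extension adapted to $M$), and that $e(P/M) \geq 0$, which is immediate since it counts linear extensions of the quotient poset.

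There is no genuine obstacle to anticipate: the entire substantive content of the corollary is already carried by Lemma~\ref{quotients}, and what remains is only the bookkeeping observation that an absolute value of a sum with at most one term is governed by $|\sgn(M)| = 1$ and the nonnegativity of $e(P/M)$. The proof is therefore expected to be a direct, short specialization of the lemma.
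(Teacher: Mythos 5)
Your proof is correct and is exactly the specialization of Lemma~\ref{quotients} that the paper intends (the paper states the corollary without further argument, treating it as an immediate consequence of the empty-sum and one-term cases of~(\ref{sirecursion})). The additional checks that $\sgn(M) = \pm 1$ and $e(P/M) \geq 0$ are the right bookkeeping and complete the argument.
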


Our goal is to flip Lemma \ref{sirecursion} by building a poset where we control $e(P/M)$. To that end, we will define an operation on posets.

Let $P = (X, \prec)$ be a poset. Call $R$ \emph{good} if it is a subset of $X^2$ with the following properties:
\begin{enumerate}
\item $(x, x) \in R$ for all $x \in X$,
\item $(x, y) \in R$ for all $x,y \in X$ such that $y$ covers $x$ in $P$,
\item If $(x, y) \in R$  then  $x \preccurlyeq y$ in $P$.
\end{enumerate}

In other words, $R$ consists of the diagonal of $X$, all covering relations of $P$, and some subset of the non-covering relations of $P$. Then we define the poset $A(P, R)$ as follows. The vertices consist of pairs of the form $(x, i)$ for $x \in X, i \in \{0, 1\}$. And our relations are given by 

\[
(x, i) \prec (y, j) \text{ if } i = 0, j = 1, \text{ and } (x, y) \in R.
\] 

We claim that this allows us to control the sign imbalance of height 2 posets. We note that the first part of this lemma was proved (in the case where $R$ is maximal) by Stachowiak in \cite{Sta97a}, who used it to show that counting the sign imbalance of a poset is \SP-hard. See \ref{basicexamples} for examples of this construction.

\begin{lemma}[Main lemma]\label{mainlemma}
Let $P$ be a poset. Then for any good $R$ defined as above, $A(P, R)$ is a poset with height 2 and
\[
si(A(P, R)) = e(P).
\]
Conversely, suppose $Q$ is a poset with height 2 that is not sign-balanced. Then if $Q$ has even number of vertices, there exists a poset $P$ and a good set $R$ such that
\[
Q = A(P, R)
\]
And if $Q$ has an odd number of vertices, then there exists a poset $P$ and a good set $R$ such that
\[
Q = A(P, R) + C_1.
\]

\end{lemma}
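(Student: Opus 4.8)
The plan is to prove the two directions separately, using Corollary~\ref{onedt} as the main engine and reserving a pigeonhole-plus-involution argument for the delicate odd case of the converse.

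\textbf{Forward direction.} First I would note that $A(P,R)$ is bipartite: every $(x,0)$ is minimal and every $(y,1)$ is maximal, so all chains have at most two elements and the height is exactly $2$. The key claim is that $A(P,R)$ has a \emph{unique} domino tableau, the ``diagonal'' matching $M^\ast = \{\{(x,0),(x,1)\} : x \in X\}$. Indeed, any part of a domino tableau is a comparable pair $(x,0)\prec(y,1)$ with $(x,y)\in R$, hence $x \preccurlyeq y$ in $P$; a perfect matching therefore determines a permutation $\sigma$ of $X$ with $x \preccurlyeq \sigma(x)$ for all $x$, and in a finite poset the only such $\sigma$ is the identity (follow any cycle of $\sigma$ and apply antisymmetry). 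So the matching is forced to be $M^\ast$, and condition~(3) holds by ordering its parts along a linear extension of $P$. Finally $A(P,R)/M^\ast \cong P$, since the generating relations $X_x \preccurlyeq X_{x'} \iff (x,x')\in R$ have transitive closure equal to $\preccurlyeq_P$ (as $R$ lies between the covering relations and the full order of $P$). Corollary~\ref{onedt} then gives $si(A(P,R)) = e(A(P,R)/M^\ast) = e(P)$.

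\textbf{Converse, even case.} Suppose $Q$ has an even number of vertices and is not sign-balanced. By Corollary~\ref{onedt} it admits a domino tableau $M$, which, having no singleton, is a perfect matching; in particular $Q$ has no isolated vertices. Since $Q$ has height $2$, every vertex is strictly minimal or strictly maximal, and each domino pairs one of each, so $M$ is a perfect matching between the minimal set $B$ and the maximal set $T$, with all relations running $B\to T$. Writing each part as $\{a_x \prec b_x\}$, I would set $P := Q/M$ and $R := \{(X_x,X_{x'}) : a_x \prec_Q b_{x'}\}$, then check that $R$ is good (it contains the diagonal and all covers of $P$, since a cover of a transitive closure must be a direct generating edge) and that $(X_x,0)\mapsto a_x$, $(X_x,1)\mapsto b_x$ is an isomorphism $A(P,R)\xrightarrow{\sim} Q$.

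\textbf{Converse, odd case, and the main obstacle.} Since $A(P,R)+C_1$ has an isolated vertex, the crux is the claim that \emph{a height-$2$ poset with an odd number of vertices and no isolated vertex is sign-balanced}. Granting this, a non-sign-balanced odd $Q$ must have an isolated vertex $w$; removing it gives an even poset $Q' = Q-w$ with $si(Q') = si(Q)\neq 0$ (inserting an isolated point into a linear extension contributes the signed factor $\sum_{p=1}^{N}(-1)^{N-p}$, which equals $1$ precisely because $N=|Q|$ is odd), so the even case applied to $Q'$ yields $Q = A(P,R)+C_1$. To prove the crux I would use that ``no isolated vertex'' makes $Q$ strictly bipartite with minimal set $B$ and maximal set $T$, $|B|\neq|T|$; say $|T|>|B|$ and $|B|=n$, so $N=2n+1$. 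In any linear extension the label $1$ lies on a minimal vertex, i.e.\ on an element of $B$, so $T$ cannot occupy the all-odd positions $\{1,3,\dots,2n+1\}$ — the unique size-$(n{+}1)$ subset of $[2n+1]$ with no two consecutive labels. Hence two elements of $T$ receive consecutive labels $i,i+1$; transposing the labels of the two maximal (hence incomparable) vertices realizing the \emph{smallest} such $i$ is a well-defined, fixed-point-free, sign-reversing involution on $LE(Q)$, whence $si(Q)=0$.

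The genuinely delicate point is this last crux. The naive involutions — swapping the two smallest or two largest labels, or swapping two maximal vertices with identical down-sets — all fail in general, and the workable idea is the pigeonhole observation that an excess of maximal vertices (together with label $1$ being forced onto a minimal vertex) guarantees two maximal vertices in consecutive positions, which can then be transposed canonically. Once this is in place, the rest of the converse is routine bookkeeping built on the even case and Corollary~\ref{onedt}.
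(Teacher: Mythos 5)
Your proof is correct, and its skeleton (Corollary~\ref{onedt} as the engine, the diagonal matching and the quotient construction, reduction of the odd case to the even case via an isolated vertex) is the same as the paper's. You supply welcome detail the paper only asserts: the cycle-of-$\sigma$ argument for uniqueness of the perfect matching in $A(P,R)$, and the explicit computation $\sum_{p=1}^{N}(-1)^{N-p}=1$ showing $si(Q)=si(Q-w)$ for odd $N$. The one place you genuinely diverge is the ``crux'' that an odd-order height-$2$ poset with no isolated vertex is sign-balanced. You build a new sign-reversing involution from scratch: pigeonhole forces two maximal elements to carry consecutive labels (since label $1$ sits on a minimal element and $\{1,3,\dots,2n+1\}$ is the unique non-consecutive $(n+1)$-subset), and you transpose the smallest such pair. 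This works --- the set of positions occupied by maximal elements is invariant under the swap, so the choice of $i$ is canonical and the map is an involution. But it is heavier than necessary: the paper observes that with no isolated vertices every domino contributes one minimal and one maximal element while the singleton must be maximal, so a domino tableau forces $\#\max=\#\min+1$; since an odd-order poset with no isolated vertex has $\#\min\neq\#\max$, after possibly flipping it admits no domino tableau at all, and Corollary~\ref{onedt} finishes immediately. So what you flag as the delicate point is, in the paper's framework, a one-line counting consequence of machinery you have already set up; your pigeonhole involution is a valid, self-contained alternative, at the cost of duplicating the cancellation work that Lemma~\ref{quotients} already encapsulates.
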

\begin{proof}
For the first part, note that by construction the Hasse diagram of $A(P, R)$ has only one perfect matching, namely $M = \left\{ \big((x, 0), (x, 1)\big) : x \in X\right\}$. This is also a domino tableau. Since $R$ contains all the covering relations of $P$, we know $A(P, R) / M = P$. The result follows from Corollary \ref{onedt}.

For the other direction, suppose $Q$ is a poset with height 2 which is not sign-balanced. By Corollary \ref{onedt}, $Q$ must have at least one domino tableau $M$. Suppose that $Q$ has an even number of vertices. We claim that the Hasse diagram of $Q$ must in fact have only one perfect matching.

Suppose for contradiction that it had another perfect matching $N$. Then $M \cup N$ must contain at least one cycle of length $ > 2$. As $Q$ was assumed to have height 2, this cycle can only correspond to a subposet of $Q$ which is isomorphic to a crown poset. That is, we have elements $x_1, \dots, x_{2k}$ in $Q$ such that
\[
x_1 \prec_M x_2 \succ_N x_3 \prec_M x_4 \succ \cdots \prec_M x_{2k} \succ_{N} x_1
\]

But this contradicts $M$ being a domino tableau, since we now have a loop in $Q/M$. Therefore $M$ is the unique perfect matching. By construction, every pair of $M$ has a bottom element and a top element.

Now let $P = Q/M$, and define $R \subset M^2$ by 
\[
(M_1, M_2) \in R \iff x \prec y \text{ for some } x \in M_1, y \in M_2.
\]
It is easy to see that $Q = A(P, R)$.

Suppose now that $Q$ has an odd number of vertices. If $Q$ has no isolated vertices, then without loss of generality it has more minimal than maximal elements. But then $Q$ cannot have a domino tableau. So $Q$ must be sign-balanced. (Note that flipping a poset vertically does not affect whether it is sign-balanced). If it has two or more isolated vertices, it also cannot have a domino tableau. So for $Q$ to not be sign-balanced, it must have exactly one isolated vertex $v$. This vertex must be the singleton in the domino tableau, which means $Q - v$ is a height 2 poset which is not sign-balanced. Now we just apply the previous case. \end{proof}

\section{Proofs}

\subsection{Proof of Theorem \ref{oddles}}
First, suppose $P$ is a height 2 poset with an odd number of linear extensions and  $2n + 1$ vertices. Clearly $P$ cannot be sign-balanced. Therefore Lemma \ref{mainlemma} implies that $P$ consists of an isolated vertex and a subposet of $2n$ vertices which also has an odd number of linear extensions. This implies that $f(2n + 1) = f(2n)$. Therefore, we will assume that our posets have an even number of vertices.

Consider the map $A$ from Lemma \ref{mainlemma}. It is easy to see that $A$ is injective. Take the chain poset $C_n$; this clearly has an odd number of linear extensions. Then there are exactly $2^{\binom{n - 1}{2}} $ possible good sets $R$. Thus the set
\[
\{ A(C_n, R) : R \text{ good } \}
\]
establishes our lower bound.

For the upper bound, consider the poset $\Pi_n$ with vertex set $[n] \times \{0, 1\}$ and relations
\[
(a_1, b_1) \prec (a_2, b_2) \text{   if   } a_1 \leq a_2 \text{ and } b_1 < b_2.
\]

Let $\mathcal{D}$ be the set of subposets of $\Pi_n$ such that $(a, 0) \prec (a, 1)$ for all $a \in [n]$. It is clear that the image of $A$ is equal to $\mathcal{D}$ and that $|\mathcal{D}| = 2^{\binom{n}{2}}$. Since any poset with an odd number of linear extensions is not sign-balanced, the lower bound follows. (Note that some posets in the image of $A$ have an even number of linear extensions, so we do not have equality here).  \qed

\subsection{Proof of Theorem \ref{forallq}} Note that it suffices to consider the case where $q$ is prime. We prove a somewhat broader version of \ref{mainlemma}. The proof essentially follows that of \ref{oddles}.

Let $P$ be a height 2 poset on $m$ elements such that $q \nmid e(P)$. Fix $\ell$ to be a linear extension of $P$. Consider the subposets $P_1, P_2, \dots, P_{\lfloor m / q \rfloor}$ defined by
\begin{align*}
P_1 &:= \text{the induced subposet on elements labeled } 1, 2, \dots, q \\
P_2 &:= \text{the induced subposet on elements labeled } q + 1, q + 2, \dots, 2q \\
\cdots
\end{align*}

Note that there may be up to $q - 1$ elements which are not contained in a subposet.  Call $\ell$ \textit{adapted} if none of these subposets are disconnected. (In the case $q = 2$, this means that $\ell$ is adapted to a domino tableau in the sense of Lemma \ref{mainlemma}).

\begin{lemma}\label{easylemma} If $q \nmid e(P)$ then $P$ has a linear extension which is adapted.
\end{lemma}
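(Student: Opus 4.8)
The plan is to count adapted linear extensions modulo $q$. I will show that the non-adapted linear extensions partition into classes each of cardinality divisible by $q$, so that $e(P) \equiv \#\{\text{adapted } \ell\} \pmod q$; since $q \nmid e(P)$ this forces the number of adapted linear extensions to be nonzero, giving the desired extension. This directly generalizes the $q = 2$ involution $\Phi$ of Lemma~\ref{quotients}, whose orbits of size $2$ become classes of size divisible by $q$.

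First, for each non-adapted $\ell$, let $i = i(\ell)$ be the smallest index for which the block $P_i$ (the induced subposet on the elements that $\ell$ assigns the labels $(i-1)q + 1, \dots, iq$) is disconnected. Declare $\ell \sim \ell'$ when $i(\ell) = i(\ell')$, with common value $i$, and $\ell, \ell'$ assign the same labels to the same elements everywhere outside block $i$ (on every other block and on the up to $q - 1$ leftover elements). This is an equivalence relation on the non-adapted linear extensions, and the class of $\ell$ is exactly the set of linear extensions obtained by redistributing the label set $\{(i-1)q + 1, \dots, iq\}$, in all valid ways, among the fixed set $S_i$ of elements of block $i$. The key bookkeeping point is that the blocks occupy contiguous intervals of labels, so permuting labels only within $S_i$ never violates a cross-block relation; hence such a rearrangement is a valid linear extension of $P$ precisely when it is valid on the induced subposet $P|_{S_i}$. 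Moreover $P|_{S_i}$ is unchanged (same elements), so it stays disconnected, while the earlier blocks are untouched and remain connected; thus $i$ remains the first bad block for every class member, so the classes genuinely tile the non-adapted extensions and $|[\ell]| = e(P|_{S_i}) = e(P_i)$.

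It remains to show $q \mid e(P_i)$ whenever $P_i$ is a disconnected poset on $q$ elements. Writing $P_i = A + B$ as a disjoint union of nonempty induced subposets with $|A| = a$, so $0 < a < q$, interleaving a linear extension of $A$ with one of $B$ gives $e(A + B) = \binom{q}{a}\, e(A)\, e(B)$. Since $q$ is prime and $0 < a < q$, the factor $q$ in the numerator of $\binom{q}{a} = q!/(a!\,(q-a)!)$ survives, so $q \mid \binom{q}{a}$ and therefore $q \mid e(P_i) = |[\ell]|$ for every class.

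Summing over the classes yields $\#\{\text{non-adapted } \ell\} \equiv 0 \pmod q$, whence $e(P) \equiv \#\{\text{adapted } \ell\} \pmod q$. Because $q \nmid e(P)$, the number of adapted linear extensions is not divisible by $q$, so in particular it is positive, proving the lemma. I expect the main obstacle to be the bookkeeping of the second paragraph: one must verify that rearranging labels solely within the first disconnected block preserves both the global linear-extension property and the identity of the first bad block, so that $|[\ell]| = e(P_i)$ holds on the nose. The arithmetic heart is then the elementary divisibility $q \mid \binom{q}{a}$, which plays the role that $2 \mid 2$ played for the involution in the $q = 2$ case.
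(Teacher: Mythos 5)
Your proof is correct and follows essentially the same route as the paper: partition the non-adapted linear extensions by the first disconnected block and by their restriction outside that block, observe each class has size $e(P_i) = \binom{q}{a}e(A)e(B)$ divisible by the prime $q$, and conclude the adapted extensions are nonzero modulo $q$. Your extra bookkeeping (that relabeling within a contiguous label block preserves global validity and the identity of the first bad block) is exactly the verification the paper leaves implicit.
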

\begin{proof}
We show that the number of linear extensions which are  not adapted is a multiple of $q$. The following is an equivalence relation on linear extensions which are not adapted.

Given a linear extension $\ell$ which is not adapted, let $
i$ be minimal such that $P_i$ is disconnected. Let $\ell \equiv \ell'$ if $\ell$ and $\ell'$ are equal when restricted to $P \backslash P_i$. It is easy to see that this is an equivalence relation, and the size of an equivalence class is $e(P_i)$.

$P_i$ is disconnected and has $q$ elements. Therefore there exist nonempty posets $Q_1, Q_2$ such that $P_i = Q_1 + Q_2$. But since
\[
e(P_i) = \binom{q}{|Q_1|} e(Q_i) e(Q_2)
\]
we have $q \mid e(P_i)$. This follows because $q$ is prime. That means that the set of linear extensions which are not adapted has been partitioned into equivalence classes of sets each of which has size a multiple of $q$. \end{proof}

So we can fix an adapted linear extension $\ell$ and corresponding subposets $P_1, \dots, P_{\lfloor{m / q} \rfloor}$. By construction, the edges from $P_i$ to $P_j$ where $i < j$ can only be between a bottom vertex of $P_i$ and a top vertex of $P_j$. The notion of bottom and top vertex are well-defined because $P_i$ and $P_j$ are connected. Each subposet can have only at most $q - 1$ elements on the top or bottom.

A simple perturbation argument shows that the greatest number of external edges is possible when the first half of the subposets are of the form $A_{q - 1} \oplus A_1$ and the second half are of the form $A_{1} \oplus A_{q - 1}$. (For simplicity we will assume $\lfloor m / q \rfloor$ is even; extending to the case where $\lfloor m/q \rfloor$ is odd is trivial.) In this case there are at most
\[
\left(\frac{1}{2} \left\lfloor \frac{m}{q} \right\rfloor \right)^2\cdot(q - 1)^2  + \left(\frac{1}{2} \left\lfloor \frac{m}{q} \right\rfloor \right)^2 \cdot (q - 1) + O(m) 
\]
possible external edges. (The first term counts edges which go between the first and second halves, and the second counts edges which remain within one half or the other).

Let $c_q$ be the number of connected height 2 posets with $q$ elements. Then we can construct $P$ as follows: first, we pick each of the $\left\lfloor m/q \right\rfloor$ subposets. Then we add external edges between  the subposets; the above discussion gives a bound on the number of ways to do this. Lastly, each of the $m - \lfloor m / q \rfloor$ remaining vertices can be greater than or incomparable to all the previous vertices. This gives an upper bound of

\[
c_q^{\left\lfloor m/q \right\rfloor} \cdot 2^{\left(\frac{1}{2} \left\lfloor \frac{m}{q} \right\rfloor \right)^2\cdot(q)(q - 1) + O(m)} \cdot 2^{(m - \lfloor m / q \rfloor)} = 2^{\frac{q - 1}{4q}m^2  + O(m)}
\]

possible posets, as required. This completes the proof of Theorem \ref{forallq}.\qed

\section{Final remarks and open problems}

\subsection{Completeness of the number of linear extensions of height 2 posets.}

Our results do not contradict Conjecture \ref{chanpakconj}, but can be used to construct numbers which are not the number of linear extensions of any height 2 poset.

We begin by giving a loose bound on the possible odd numbers of linear extensions of a height 2 poset: 

\begin{proposition}
Let $P$ be a height 2 poset on $2n$ vertices with an odd number of linear extensions. Then
\[
\big( n! \big)^2 \leq e(P) \leq n!(2n - 1)!!
\]

\end{proposition}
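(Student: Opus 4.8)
The plan is to bound $e(P)$ for a height 2 poset $P$ on $2n$ vertices with an odd number of linear extensions. Since $e(P)$ is odd, $P$ is not sign-balanced, so Corollary~\ref{onedt} and Lemma~\ref{mainlemma} apply: $P$ has a unique domino tableau $M$, and $P = A(Q, R)$ for $Q = P/M$ a poset on $n$ vertices together with a good set $R$. Every domino pairs a bottom vertex $(x,0)$ with a top vertex $(x,1)$, so $P$ has exactly $n$ minimal and $n$ maximal elements in this structure, and all $2n$ labels are assigned so that each pair $\{2i-1, 2i\}$ lands inside one domino.

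For the \textbf{lower bound}, first I would observe that the sparsest height 2 poset on $2n$ vertices with a unique domino tableau and an odd linear extension count is the disjoint union of $n$ two-element chains, i.e. $A(A_n, R)$ with $R$ minimal (just the diagonal plus the $n$ covering relations). This poset has $e(P) = \binom{2n}{2,2,\dots,2}/\text{(symmetry)}$; more directly, its linear extensions correspond to ways of interleaving $n$ independent two-element chains, giving $(2n)!/2^n = (2n-1)!!\,n!$. But that is the \emph{maximum}, not the minimum. For the minimum I would instead argue that adding relations only \emph{decreases} $e(P)$, and the most constrained case consistent with a unique domino tableau and oddness is when $Q = C_n$ is a chain, so $P = A(C_n, R)$ with $R$ maximal forces $P \cong \Pi_n$-type structure whose linear extensions are counted by standard domino/ballot arguments to give exactly $(n!)^2$. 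The cleanest route is: the $n$ bottom elements must receive their relative order and the $n$ top elements their relative order, and in the maximally-constrained poset these two orderings are independent subject to the domino adjacency constraint, yielding $(n!)^2$; any fewer relations can only increase $e(P)$, establishing $(n!)^2 \le e(P)$.

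For the \textbf{upper bound}, I would run the opposite extreme: $e(P)$ is largest when $P$ has as few relations as possible while still admitting a (unique) domino tableau, which is the disjoint union of $n$ dominoes, $A(A_n, R_{\min})$. A linear extension of $n$ disjoint two-element chains is exactly a shuffle of $n$ chains of length 2, counted by the multinomial $(2n)!/2^n$, which equals $n!\,(2n-1)!!$. Since relations only remove linear extensions, $e(P) \le n!\,(2n-1)!!$ for every valid $P$.

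The main obstacle will be pinning down the lower bound rigorously: it is easy to see heuristically that more relations give fewer extensions, but one must verify that the minimum over \emph{all} height 2 posets with odd $e(P)$ is actually achieved by a chain-quotient configuration and equals exactly $(n!)^2$, rather than merely being bounded below by it. I would handle this by interpreting linear extensions adapted to $M$ via the quotient: since $e(P) = e(P/M)$ is not directly the count (the unique domino tableau gives $si(P) = e(P/M)$, but here we want the total $e(P)$), I need to count \emph{all} linear extensions, not just adapted ones. The right tool is the lattice-path / RSK correspondence with domino tableaux: a height 2 poset on $2n$ vertices with $n$ minimal and $n$ maximal elements has its linear extensions in bijection with pairs of compatible orderings of the bottom and top rows, and the fully-constrained case $\Pi_n$ forces these to be order-isomorphic copies of $[n]$, giving the $(n!)^2$ floor. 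Making this bijection precise, and confirming both extremal posets indeed have odd $e(P)$ (so that they lie in the stated class), is where the real care is required.
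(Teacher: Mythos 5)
Your upper bound is essentially the paper's argument and is correct: oddness of $e(P)$ forces $P$ not sign-balanced, Lemma~\ref{mainlemma} gives $P = A(Q,R)$ with $Q$ on $n$ elements, the $n$ dominoes $(x,0)\prec(x,1)$ are forced relations, so the relation set of $P$ contains that of $n$ disjoint copies of $C_2$ and hence $e(P) \le (2n)!/2^n = n!\,(2n-1)!!$.

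The lower bound, however, contains a genuine error. You claim the most constrained case is $P = A(C_n, R_{\max}) \cong \Pi_n$ and that this poset has exactly $(n!)^2$ linear extensions; both halves of that claim are false. Already for $n=2$: $\Pi_2$ is the poset on $\{a,b,c,d\}$ with $a \prec c$, $a \prec d$, $b \prec d$, which has $5$ linear extensions, not $(2!)^2 = 4$ (and $\Pi_2$ is in fact the \emph{unique} height 2 poset on $4$ vertices with odd $e$, so the true minimum over the class is $5$, strictly above the stated bound). The correct comparison poset is not $\Pi_n$ but $A_n \oplus A_n$, the poset with all $n$ bottom elements below all $n$ top elements: every relation of $A(Q,R)$ has the form $(x,0) \prec (y,1)$, i.e.\ goes from a bottom element to a top element, so the relation set of $P$ is contained in that of $A_n \oplus A_n$; since adding relations only deletes linear extensions, $e(P) \ge e(A_n \oplus A_n) = (n!)^2$. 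Note that $A_n \oplus A_n$ is not of the form $A(Q,R)$ for $n \ge 2$ and has even $e$, so your closing worry about ``confirming both extremal posets indeed have odd $e(P)$'' is a red herring: the sandwich $\bigcup_{i=1}^n C_2 \subseteq P \subseteq A_n \oplus A_n$ only needs to hold at the level of relation sets on a common ground set, and neither extreme need belong to the class nor be attained. That sandwich is exactly the paper's one-line proof.
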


\begin{proof}

By Lemma \ref{mainlemma}, any such poset must satisfy

\[
\bigcup_{i = 1}^n C_2 \subseteq P \subseteq A_n \oplus A_n.
\]

\end{proof}

Letting $n = 9$ we see that any height 2 poset on $\leq 18$ elements for which $e(P)$ is odd has at most $9! \cdot 17!! = 12504636144000$ linear extensions. But letting $n = 10$ we see that any height 2 poset on $20$ vertices for which $e(P)$ is odd has at least $10!^2= 13168189440000$ linear extensions. Also by Lemma \ref{mainlemma}, any height 2 poset $P$ on $19$ elements has an isolated vertex, and so $19 \mid e(P)$. Combining these facts we obtain that there is no height 2 poset with $10!^2 - 1 = 13168189439999$ linear extensions. See the footnote of $\S 6.4$ in \cite{CP23+}.

\subsection{Geometric definition of Ruskey's conjecture.}\label{geometric} Given a poset $P$ with elements labeled by $[n]$, the \defn{order polytope} $O(P)$ is the subset of $[0, 1]^n$ given by
\begin{align*}
x_i \leq x_j \text{ whenever } i \prec j \text{ in } P
\end{align*}

Its \defn{canonical triangulation} is given by cutting $O(P)$ with the hyperplanes of the form $x_i = x_j$ for $i, j \in [n]$. Note that each simplex corresponds to a linear extension of $P$. Since this is a triangulation obtained by cutting with hyperplanes, it must be bipartite. \cite{SS06} observed that the two parts correspond exactly to the odd and ever permutations in the definition of sign imbalance, and so a poset is sign-balanced if and only if both parts are the same size.

We note that this provides an immediate proof that the graph $G(P)$ is connected. Any two face-adjacent simplicies in the canonical triangulation of $O(P)$ correspond to linear extensions which differ by an adjacent transposition. Since triangulations must be face-connected,  $G(P)$ is connected.

Soprunova and Sottile \cite{SS06}  considered toric varieties associated to order polytopes. In this case, sign imbalance provides a lower bound for the number of real solutions of a Wronski polynomial system.

\subsection{\textsf{GapP} and \textsf{\#P}} \label{staiswrong}
It is easy to see that sign imbalance is the absolute difference between two \textsf{\#P} functions, one counting even linear extensions and the other counting odd linear extensions. However, this does not imply (as claimed in \cite{Sta97a}) that sign imbalance is in \textsf{\#P}. The closure of \textsf{\#P} under subtraction is called \textsf{GapP} and was defined independently in \cite{FFK94} and \cite{Gup95} building on \cite{OH93}. Therefore, the following conjecture remains open:
\begin{conj}
\textsc{Sign Imbalance} is \emph{not} in \textsf{\#P}.
\end{conj}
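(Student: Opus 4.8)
The plan is to reduce the conjecture to a single, cleanly identifiable separation in the counting hierarchy, and then to pinpoint exactly which separation must be proved. The starting point is the standard necessary condition for membership in \textsf{\#P}: if $f \in \textsf{\#P}$ is witnessed by a polynomial-time predicate $R$ with $f(x) = \#\{y : R(x,y)\}$, then the positivity language $\{x : f(x) > 0\}$ lies in \textsf{NP}, since a single accepting $y$ certifies $f(x)>0$. Applied to $f = si$, membership of \textsc{Sign Imbalance} in \textsf{\#P} would place
\[
\textsc{Unbalanced} \ := \ \{\,P : si(P) > 0\,\} \ = \ \{\,P : P \text{ is not sign-balanced}\,\}
\]
in \textsf{NP}, equivalently it would place $\textsc{Balanced} = \{P : si(P)=0\}$ in \textsf{coNP}. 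Thus the conjecture follows from proving, unconditionally, that $\textsc{Balanced}\notin\textsf{coNP}$.

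The reason this reformulation is useful is that $\textsc{Balanced}$ sits naturally in $\mathsf{C}_{=}\mathsf{P}$. Writing $\mathrm{ev}(P)$ and $\mathrm{od}(P)$ for the numbers of even and odd linear extensions, both lie in \textsf{\#P}, their difference $g(P) = \mathrm{ev}(P) - \mathrm{od}(P)$ is a \textsf{GapP} function, and $si(P) = |g(P)|$. By the definition of $\mathsf{C}_{=}\mathsf{P}$ as the class of languages of the form $\{x : (\text{a }\textsf{GapP}\text{ function}) = 0\}$, we get $\textsc{Balanced} = \{P : g(P)=0\}\in\mathsf{C}_{=}\mathsf{P}$. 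Since $\textsf{coNP}\subseteq\mathsf{C}_{=}\mathsf{P}$ always holds (a \textsf{coNP} instance is exactly a ``zero witnesses'' condition), the target becomes: show $\textsc{Balanced}$ is $\mathsf{C}_{=}\mathsf{P}$-\emph{hard}, so that $\textsc{Balanced}\in\textsf{coNP}$ would force $\mathsf{C}_{=}\mathsf{P}\subseteq\textsf{coNP}$ and hence $\mathsf{C}_{=}\mathsf{P}=\textsf{coNP}$.

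The hardness step is where the combinatorics of this paper does real work, and I expect it to be the most technical but achievable part. By Lemma~\ref{quotients} we have $g(P) = \sum_{M \in DT(P)} \sgn(M)\, e(P/M)$, so a poset carrying exactly two domino tableaux $M_1, M_2$ of opposite sign satisfies $g(P) = e(P/M_1) - e(P/M_2)$ and is sign-balanced precisely when $e(P/M_1) = e(P/M_2)$. This is an equality of two linear-extension counts, and since counting linear extensions is \textsf{\#P}-complete (\cite{BW91}, and already for height $2$ by \cite{DP20}), deciding such an equality is $\mathsf{C}_{=}\mathsf{P}$-hard. The crown gadget in the example following Lemma~\ref{quotients} (two matchings of opposite sign with quotient counts $4$ and $2$) is the prototype; the task is to engineer, in polynomial time, a height $2$ poset with \emph{exactly} two opposite-sign domino tableaux whose quotients realize two prescribed posets $Q_1, Q_2$ taken from a $\mathsf{C}_{=}\mathsf{P}$-complete equality instance. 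Guaranteeing that no further domino tableaux appear is the delicate point, but the rigidity argument in the proof of Lemma~\ref{mainlemma} (ruling out a second perfect matching via crown subposets) strongly suggests this can be arranged.

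Granting the reduction, the conjecture becomes equivalent to the unconditional separation $\mathsf{C}_{=}\mathsf{P}\neq\textsf{coNP}$, and \textbf{this is the main obstacle}: we have only traded ``\textsc{Sign Imbalance} $\notin\textsf{\#P}$'' for an unrelativized lower bound separating exact counting from \textsf{coNP}, which is beyond the reach of current techniques and whose failure would be a dramatic and unexpected collapse. In short, the combinatorial core can be fully resolved (the $\mathsf{C}_{=}\mathsf{P}$-hardness of sign-balance), but the residual complexity-theoretic core is exactly a separation that is not known to hold unconditionally, which is precisely why the statement is posed as a conjecture. A genuinely unconditional proof would accordingly require either a new lower-bound method against \textsf{NP}, or a sign-imbalance-specific obstruction showing that no polynomial-size certificate can witness a nonzero difference between the numbers of even and odd linear extensions.
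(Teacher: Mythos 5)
You were asked to prove a statement that the paper itself poses as an \emph{open conjecture}: the paper's \S\ref{staiswrong} offers no proof, only the observation that $si$ is the absolute value of a \textsf{GapP} function and that Stachowiak's claim of membership in \textsf{\#P} is unjustified. Your proposal, as you candidly acknowledge, does not prove it either. The first steps are correct: if $si \in \textsf{\#P}$ then $\{P : si(P) > 0\} \in \textsf{NP}$, i.e.\ $\textsc{Balanced} \in \textsf{coNP}$, and $\textsc{Balanced}$ does lie in $\mathsf{C}_{=}\mathsf{P}$ since $si$ is the absolute value of a \textsf{GapP} function. But the endpoint of your reduction --- that $\textsc{Balanced} \in \textsf{coNP}$ would force $\mathsf{C}_{=}\mathsf{P} \subseteq \textsf{coNP}$, which you then must rule out --- is itself an unconditional separation that is wide open (its failure would have dramatic consequences via Toda-type results, but no proof of the separation is known). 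So what you have produced is a conditional implication and a research program, not a proof; this is the genuine and unavoidable gap, and it is exactly why the statement appears in the paper as a conjecture rather than a theorem.

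Beyond that structural point, even the part you label ``achievable'' --- $\mathsf{C}_{=}\mathsf{P}$-hardness of $\textsc{Balanced}$ --- has two unproved holes. First, hardness of deciding $e(Q_1) = e(Q_2)$ does not follow directly from the \textsf{\#P}-completeness results you cite: the reductions of \cite{BW91} and \cite{DP20} are Turing reductions (polynomially many oracle calls plus arithmetic), whereas $\mathsf{C}_{=}\mathsf{P}$-hardness requires a many-one-style reduction at the level of gap functions (parsimonious or at least affine control of the counts); this can likely be extracted from the literature but must be argued, not asserted. Second, the gadget step --- building in polynomial time a height~$2$ poset with \emph{exactly} two domino tableaux, of opposite sign, whose quotients realize two prescribed posets --- is only claimed by analogy with the crown example after Lemma~\ref{quotients}. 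Ruling out all other domino tableaux, and guaranteeing the two intended matchings have opposite sign (which depends on the parity of the permutation relating the two adapted linear extensions), is precisely the rigidity statement that would need a proof; the argument in Lemma~\ref{mainlemma} shows rigidity only in the regime where a \emph{unique} perfect matching exists, which is the opposite of what your gadget needs. So both the complexity-theoretic core and the combinatorial core remain open in your write-up.
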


Many natural combinatorial differences are not in \textsf{\#P}; see \cite{IP22} for a survey.

\subsection{An example of Theorem \ref{oddles}} \label{basicexamples}Consider the case $n = 3$. There are two posets on 3 elements with an odd number of linear extensions, namely $C_2 + C_1$ and $C_3$. The former poset has $re(C_2 + C_1) = cr(C_2 + C_2) = 1$, and the latter has $re(C_3) = 2$ and $cr(C_3) = 1$. So there are $2^{1- 1} + 2^{2- 1} = 3$ height 2 posets on $6$ elements with an odd number of linear extensions, namely:

\vspace{5mm}
\begin{center}
\begin{tikzpicture}

\begin{scope}[every node/.style={circle,thick,draw, minimum size = 0cm, inner sep=1pt, fill=black}]
\node (A1) at (0, 5) {};
\node (B1) at (0, 6) {};
\node (A2) at (1, 5) {};
\node (B2) at (1, 6) {};
\node (A3) at (2, 5) {};
\node (B3) at (2, 6) {};

\node (C1) at (5, 5) {};
\node (D1) at (5, 6) {};
\node (C2) at (6, 5) {};
\node (D2) at (6, 6) {};
\node (C3) at (7, 5) {};
\node (D3) at (7, 6) {};

\node (E1) at (10, 5) {};
\node (F1) at (10, 6) {};
\node (E2) at (11, 5) {};
\node (F2) at (11, 6) {};
\node (E3) at (12, 5) {};
\node (F3) at (12, 6) {};

\end{scope}

\path (A1) edge node {} (B1)
(A1) edge node {} (B2)
(A2) edge node {} (B2)
(A3) edge node {} (B3);

\path (C1) edge node {} (D1)
(C2) edge node {} (D2)
(C3) edge node {} (D3)
(C1) edge node {} (D2)
(C2) edge node {} (D3);

\path (E1) edge node {} (F1)
(E2) edge node {} (F2)
(E3) edge node {} (F3)
(E1) edge node {} (F2)
(E1) edge node {} (F3)
(E2) edge node {} (F3);

\end{tikzpicture}
\end{center}
\vspace{5mm}

These posets were obtained by applying the map $A$ in Lemma \ref{mainlemma}. The left poset has $P = C_1 + C_2$, and the other two have $P = C_3$ for different choices of $R$. They have $75$, $61$, and $57$ linear extensions respectively.

\subsection{Euler numbers and posets for which many primes do not divide $e(P)$} The Euler numbers $E_n$ \cite[A000111]{OEIS} count the number of linear extensions of a zizag poset $Z_n$ and have exponential generating function $\sec x + \tan x$. Using these, we can construct posets whose number of linear extensions avoids divisibility by many primes. Theorem \ref{forallq} shows that for all $q$ the number of posets with $q \nmid e(P)$ is small. Here we give an example of an infinite set of posets which satisfy many such conditions.

\begin{proposition} Let $Q$ be a finite set of primes. Then there exists an infinite sequence of posets $P_1, P_2, \dots,$ such that
\[
e(P_i) \equiv 1 \mod q \text{ for all } q \in Q, i \geq 1.
\]
\end{proposition}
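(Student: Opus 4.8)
The plan is to realise the required numbers as linear--extension counts of zigzag posets, using that $e(Z_n)=E_n$, and to reduce everything to one arithmetic statement about the Euler numbers modulo a prime. Set $M:=\prod_{q\in Q}q$; since $M$ is squarefree, by the Chinese Remainder Theorem the condition ``$e(P_i)\equiv 1\pmod q$ for every $q\in Q$'' is equivalent to the single congruence $e(P_i)\equiv 1\pmod M$.

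First I would isolate an amplification step, so that it suffices to exhibit a \emph{single} good index. Suppose one can find one $n_0\ge 1$ with $\gcd(E_{n_0},M)=1$. Put $\lambda:=\operatorname{lcm}_{q\in Q}(q-1)$ and let
\[
P_j:=\underbrace{Z_{n_0}\oplus\cdots\oplus Z_{n_0}}_{j\lambda\text{ copies}},\qquad j\ge 1 .
\]
Since the ordinal sum multiplies the number of linear extensions, $e(P_j)=E_{n_0}^{\,j\lambda}$. As $\gcd(E_{n_0},q)=1$, the residue of $E_{n_0}$ lies in $(\zee/q\zee)^{\times}$, whose order divides $q-1$ and hence $\lambda$; thus $E_{n_0}^{\lambda}\equiv 1\pmod q$ for each $q\in Q$, so $e(P_j)=(E_{n_0}^{\lambda})^{j}\equiv 1\pmod M$ for all $j$. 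This gives an infinite family, non-trivial the moment $E_{n_0}>1$.

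It remains to produce one index $n_0$ with $E_{n_0}$ coprime to $M$ (and $>1$). For this I would invoke the periodicity of the Euler numbers: for each prime $q$ the sequence $(E_n\bmod q)_{n\ge 0}$ is eventually periodic, so by CRT $(E_n\bmod M)_n$ is eventually periodic as well. Since $E_0=E_1=E_2=1$ and $1$ is a unit modulo $M$, the value $1$ occurs, and I would argue it \emph{recurs} in the periodic tail; any later occurrence furnishes an admissible $n_0$ with $E_{n_0}>1$. In fact, once $1$ recurs one may skip the amplification entirely and take $P_i:=Z_{n_i}$ directly along the arithmetic progression of indices on which $E_n\equiv 1\pmod M$, obtaining posets whose extension counts grow without bound while staying $\equiv 1\pmod M$.

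The heart of the matter — and the step I expect to be the main obstacle — is the periodicity input together with the claim that $1$ genuinely recurs modulo $M$. This is not a formal consequence of any recurrence: $E_n$ grows super-exponentially (so $(E_n)$ is not C-finite) and $\sec x+\tan x$ has infinitely many poles (so $(E_n)$ is not holonomic), and hence eventual periodicity modulo $q$ is a true arithmetic property of the tangent and secant numbers rather than a formal one; I would cite it from the classical literature on these numbers (it is recorded for the entry of $E_n$ in \cite{OEIS}). The remaining bookkeeping — aligning, across the finitely many $q\in Q$, the residue classes on which $E_n\equiv 1\pmod q$ — is then handled by passing to the least common multiple of the individual periods, anchored at the known value $E_2=1$; the only delicate point there is to ensure these per-prime classes are simultaneously solvable, which is exactly the assertion that $E_n\equiv 1\pmod M$ for infinitely many $n$.
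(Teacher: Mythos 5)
Your overall architecture is genuinely different from the paper's, and the part you fully execute is correct: the ordinal-sum amplification $e(Z_{n_0}\oplus\cdots\oplus Z_{n_0})=E_{n_0}^{j\lambda}$ together with Fermat's little theorem cleanly reduces the problem to exhibiting a single index $n_0$ with $E_{n_0}$ a unit modulo $M$ (and $E_{n_0}>1$ if the posets are to be interesting --- as literally stated the proposition is already satisfied by chains). Your reduction even handles a sign subtlety more robustly than the paper's direct route, since any residue $\pm1$ is killed by raising to the power $\lambda$.

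The gap is precisely in the step you defer to the literature. Eventual periodicity of $(E_n \bmod M)$ does \emph{not} imply that the value $1$ --- or any unit --- occurs in the periodic tail: an eventually periodic sequence can begin $1,1,1$ and then be identically $0$ forever, so ``$1$ occurs, hence it recurs'' does not follow. What you actually need is the purely periodic form with a unit multiplier: for $n>q$ one has $E_n\equiv E_q\,E_{n-(q-1)}\pmod q$ with $E_q\equiv\pm1\pmod q$, so the sequence mod $q$ is (anti)periodic and the unit values at small indices genuinely propagate; in particular any $n_0\equiv 1\pmod{\lambda}$ gives $E_{n_0}\equiv\pm1\pmod q$ for every $q\in Q$. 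This is exactly the congruence \eqref{congruences} that the paper proves, and it is proved there in two lines by the same orbit argument as Lemma \ref{easylemma}: linear extensions of $Z_n$ in which $1,\dots,q$ do not form a contiguous block fall into classes of size divisible by $q$, while those in which they do contribute $E_q E_{n-(q-1)}$. If you import that congruence (or the precise Knuth--Buckholtz periodicity of tangent and secant numbers mod $p$, rather than bare ``eventual periodicity''), your proof closes; once you have it, the amplification step is no longer strictly necessary, but it remains a legitimate and slightly cleaner alternative ending to the paper's choice of $n$ in an arithmetic progression.
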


\begin{proof}
We show that for any prime $q$ and integer $n > q$, 
\begin{equation}\label{congruences}
E_n \equiv E_q E_{n - (q - 1)} \mod q
\end{equation}

Indeed, consider the set of $\ell \in LE(Z_n)$ such that $1, 2, \dots, q$ are \textit{not} in a contiguous block. For each of these linear extensions, we can permute the labels $1, 2, \dots, q$ in some number of ways that is divisible by $q$. (This is essentially the same as the proof of Lemma \ref{easylemma}). So, $\mod q$, we can ignore these. But if $1, \dots, q$ are in a contiguous block, then we can collapse the entire block into a single element; this leaves $Z_{n - (q - 1)}$. This shows \ref{congruences}.

Since $E_q \equiv \pm 1 \mod q$ \cite[A000111]{OEIS} and $E_1 = 1$ we need only pick $n$ such that $n \equiv 1 \mod (q - 1)$ for all $q \in Q$. \end{proof}

As a side note, we show that $3 \nmid E_n$ for all $n \geq 1$. Indeed, one can verify that $3 \nmid E_n$ for all $1 \leq n \leq 3$, and by \ref{congruences} the result follows.  Similarly, computer calculations show that $p \nmid E_n$ for all $n$ for 
\[
p = 3, 7, 11, 23, 83, 107, 163, 167,
179,
191,
199,
211,
227,
239,
367,
383,
443,
479,
487,
503,
599, \dots
\]

It is not clear if there are infinitely many such primes.

\subsection{Acknowledgments}

I would like to thank my advisor Igor Pak for suggestions and guidance, as well as Andrew Sack for helpful discussions.

\vskip1.cm

\end{document}